\pdfoutput=1
\documentclass[10pt]{article}
\usepackage[utf8]{inputenc}
\usepackage[T1]{fontenc}
\usepackage{amsmath,amssymb,amsthm}
\usepackage[a4paper,left=22mm,right=22mm,top=22mm,bottom=24mm]{geometry}
\usepackage{booktabs}
\usepackage{enumitem}
\usepackage{xcolor}
\usepackage[colorlinks=true,linkcolor=blue!55!black,citecolor=red!55!black,
  urlcolor=blue!55!black]{hyperref}

\theoremstyle{plain}
\newtheorem{theorem}{Theorem}[section]
\newtheorem{proposition}[theorem]{Proposition}
\newtheorem{lemma}[theorem]{Lemma}
\newtheorem{corollary}[theorem]{Corollary}
\theoremstyle{remark}

\newcommand{\R}{\mathbb{R}}
\newcommand{\Z}{\mathbb{Z}}
\newcommand{\Q}{\mathbb{Q}}
\newcommand{\CC}{\mathbb{C}}
\newcommand{\E}{\mathbb{E}}
\newcommand{\diam}{\operatorname{diam}}
\newcommand{\dist}{\operatorname{dist}}
\newcommand{\vol}{\operatorname{vol}}
\newcommand{\Aut}{\operatorname{Aut}}

\setlist{itemsep=1pt,topsep=2pt}

\title{\bfseries New upper bounds for the chromatic numbers of Euclidean
spaces:\\[3pt]
\large $\chi(\R^4)\le43$,\; $\chi(\R^5)\le132$,\; $\chi(\R^7)\le1029$,\;
$\chi(\R^9)\le7203$,\; $\chi(\R^{10})\le45619$}
\author{Leonid L. Ivanov\thanks{E-mail: \texttt{leo.ivanov@gmail.com}.}
  \and Nadezhda V. Glushkova\thanks{E-mail:
  \texttt{glushkova.nv@phystech.edu}.}}
\date{September 2026}

\begin{document}
\maketitle

\begin{abstract}
\noindent
A coloring of $\R^n$ is \emph{proper for the forbidden distance segment}
$[1,\ell]$ if no two points of the same color are at a distance from
$[1,\ell]$; the minimum number of colors is $\chi(\R^n,[1,\ell])$, and
$\ell=1$ gives the classical chromatic number $\chi(\R^n)$ of the
Nelson--Hadwiger problem. We prove the new upper bounds
$\chi(\R^4)\le43$, $\chi(\R^5)\le132$, $\chi(\R^7)\le1029$,
$\chi(\R^9)\le7203$, $\chi(\R^{10})\le45619$, improving the previously
known $49$, $140$, $1372$, $17253$ and $3^{10}$; in particular, this refutes
the conjecture of Arman, Bondarenko, Prymak and Radchenko that $49$ and
$140$ are optimal among all lattice colorings of $\R^4$ and $\R^5$. The
first four bounds come from explicit rational lattices --- an Eisenstein
lattice in $\R^4$, a lattice in general position in $\R^5$, and laminations
of the Eisenstein colorings $E_6^*/343$ and $E_8/2401$ in $\R^7$ and $\R^9$
--- and each is reduced, by one verification protocol, to a finite list of
inequalities between explicitly written rational numbers checked in exact
arithmetic. The fifth bound is analytic: we prove that for every Eisenstein
lattice $\Lambda$ the distance between same-colored cells of
$(3+\omega)\Lambda$ equals $\sqrt{7/3}\,\lambda_1(\Lambda)$, which gives the
exact widths of all known colorings with $7^{n/2}$ colors, and a product
rule $\sum_i1/d_i^2\le1$ for the widths of orthogonal products; together they
yield $45619=2401\cdot19$, the first bound in $\R^{10}$ below $3^n$, as well
as $\chi(\R^{25})\le4\cdot7^{12}$ and $\chi(\R^{26})\le19\cdot7^{12}$. We
also show that no sublattice of $E_8$ of index below $2401$ defines a proper
coloring. All code, exact certificates and data are open.

\smallskip
\noindent\emph{MSC 2020:} 52C10 (primary); 05C15, 52C07, 11H31.\\
\emph{Keywords:} chromatic number of Euclidean space, Nelson--Hadwiger
problem, forbidden distance interval, lattice colorings, Voronoi cell,
covering radius, Eisenstein lattices, $E_8$.
\end{abstract}

\section{Introduction}\label{sec:intro}

The \emph{chromatic number} $\chi(\R^n)$ is the minimum number of colors
needed to color the points of $\R^n$ so that no two points at distance
exactly $1$ receive the same color (the Nelson--Hadwiger problem; for the
plane only $5\le\chi(\R^2)\le7$ is known \cite{deGrey,Soifer}). For
$\ell\ge1$ let $\chi(\R^n,[1,\ell])$ be the minimum number of colors of a
coloring in which no two points of the same color realize any distance from
the segment $[1,\ell]$ \cite{Ivanov2011}; forbidding more distances cannot
require fewer colors, so
\begin{equation}\label{eq:mono}
\chi(\R^n)=\chi(\R^n,[1,1])\ \le\ \chi(\R^n,[1,\ell])\qquad(\ell\ge1),
\end{equation}
and every upper bound for the interval version bounds $\chi(\R^n)$. For
the history of the problem see the surveys~\cite{BogolubskyRaigorodskii,Raigorodskii2001,Raigorodskii2013,Raigorodskii2014}.

All known upper bounds in small dimensions come from \emph{lattice
colorings}: the space is cut into the Voronoi cells of a lattice $\Lambda$,
and the cell of $v\in\Lambda$ receives the color of the coset $v+\Gamma$ of
a sublattice $\Gamma\subset\Lambda$ of index $k$. The coloring is proper for
$[1,\ell]$ whenever its \emph{width} --- the ratio of the smallest distance
between same-colored cells to the diameter of a cell --- exceeds $\ell$
(Section~\ref{sec:method}). The best previous values in dimensions $4$--$10$
are due to Arman, Bondarenko, Prymak and Radchenko \cite{ABPR} (ABPR below):
$49$, $140$, $343$, $1372$, $2401$, $17253$ and $3^{10}$. In dimension $4$
the bound $49$ (Coulson; proved, with the minimality of the index $49$ on
the lattice $D_4$, in \cite{ABPR}) improved $54$ of \cite{RadoicicToth},
while the lower bound is $9$ \cite{ExooIsmailescu} (after $7$
\cite{Ivanov2006}). ABPR conjecture (Open question~1) that $49$ and $140$ are
optimal among all lattice colorings of $\R^4$ and $\R^5$. This paper refutes
that conjecture.

\begin{theorem}[main theorem]\label{thm:main}
\[
\chi(\R^4)\le43,\qquad \chi(\R^5)\le132,\qquad \chi(\R^7)\le1029,\qquad
\chi(\R^9)\le7203,\qquad \chi(\R^{10})\le45619 .
\]
More precisely, $\chi(\R^n,[1,\ell])\le k$ for all $\ell\le\ell_0$ for the
triples $(n,k,\ell_0)$ of Table~\ref{tab:main}.
\end{theorem}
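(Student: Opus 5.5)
The theorem will follow from the monotonicity \eqref{eq:mono} once, for each row $(n,k,\ell_0)$ of Table~\ref{tab:main}, we exhibit a lattice $\Lambda\subset\R^n$ and a sublattice $\Gamma\subset\Lambda$ of index $k$ whose Voronoi coloring has width strictly greater than $\ell_0$. The first step is the basic criterion announced in the introduction. Color the Voronoi cell $V+v$ by the coset $v+\Gamma$, with $V$ the cell at $0$. Two points of the same color lie in $V+v$ and $V+v+\gamma$ for some $\gamma\in\Gamma\setminus\{0\}$, so their distance is at least
\[
d=\min_{\gamma\ne0}\dist(V,V+\gamma).
\]
Points in one cell are at distance at most $\diam V=2R(\Lambda)$. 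If we rescale so that $\diam V<1$ and $d>\ell_0$, then no same-colored pair realizes a distance in $[1,\ell_0]$. Boundary points can be assigned to any adjacent cell, which costs only a small perturbation, and this is absorbed by the strict inequality.

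For $n=4,5,7,9$ the plan is the same each time. Write down an explicit rational Gram matrix for $\Lambda$ and an explicit basis for $\Gamma$:
\begin{itemize}
\item in $\R^4$, $\Lambda$ is an Eisenstein lattice and $\Gamma$ is an ideal sublattice of norm $43$;
\item in $\R^5$, $\Lambda$ is a generic rational lattice with a sublattice of index $132$;
\item in $\R^7$ and $\R^9$, we laminate the Eisenstein colorings $E_6^*/343$ and $E_8/2401$, adding a layer with $3$ colors for $1029=3\cdot343$ and a layer with $3$ colors for $7203=3\cdot2401$.
\end{itemize}
The verification then has three steps. First, compute the Voronoi relevant vectors and the vertices of $V$ in exact rational arithmetic, and from these get $R^2=\max|x|^2$ over the vertices, hence $\diam V=2R$. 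Second, note that $V+\gamma$ can come within distance $\ell_0\diam V$ of $V$ only if $|\gamma|\le 2R+\ell_0\cdot2R$. So only finitely many $\gamma$ matter, and they can be enumerated by Fincke--Pohst. Third, for each such $\gamma$, compute $\dist(V,V+\gamma)^2$ by a convex quadratic program over the polytope $V\times V$. Its optimum is attained on a pair of faces and is certified by rational KKT multipliers. Each certificate is a finite inequality between explicit rationals, and these are checked exactly.

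For $n=10$ we use the analytic route. Take $\Lambda$ an Eisenstein lattice in $\R^8$ with the $2401$-coloring $\Lambda/(3+\omega)^2\Lambda$ of the $E_8$ construction. By the stated distance result, same-colored cells are at distance $\sqrt{7/3}\,\lambda_1$ at each step, which gives the width $d_1$ exactly. Combine this with a $19$-coloring of $\R^2$ of known width $d_2$, for instance the hexagonal sublattice of index $19=|4+\omega|^2\cdot\ldots$, or whatever Table~\ref{tab:main} specifies. Then apply the product rule. In the orthogonal product, both the diameter and the distance between cells are Pythagorean sums, and two same-colored cells must be same-colored in at least one factor. Optimizing over the scalings of the factors then shows that the width exceeds $1$ whenever $\sum 1/d_i^2\le 1$, with care about the strictness. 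The numbers $d_1,d_2$ then need to be checked to satisfy this.

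The main obstacle will be the third step in dimensions $7$ and $9$. The Voronoi cells of the laminated lattices have a huge number of vertices and faces, and the number of candidate $\gamma$ is large. To keep this manageable, I would exploit the automorphism group of the Eisenstein lattice to reduce to orbit representatives of $\gamma$. For the distance computation I would first use the cheap lower bound $\dist(V,V+\gamma)\ge |\gamma|-2R$, and run the exact QP certificate only for the few short $\gamma$ where that bound fails.
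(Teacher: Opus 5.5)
Your architecture matches the paper's. It uses Proposition~\ref{prop:crit}. For $n=4,5,7,9$ it follows the protocol of Proposition~\ref{prop:protocol}: relevant vectors, exact vertex enumeration, the window $\|v\|<2(1+\ell)R$, and exact KKT certificates. For $n=10$ it combines the Eisenstein identity (Theorem~\ref{thm:eis}) with the product rule (Proposition~\ref{prop:product}). Two small remarks. No boundary perturbation is needed, because $D(v)$ is a distance between closed cells and any tie-break works. And the paper avoids your QP over $V\times V$ by Lemma~\ref{lem:mid}, $D(v)=2\dist(v/2,V_0)$, so each certificate is a projection onto $V_0$ in dimension $n$. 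For these four rows the proof \emph{is} the data of Section~\ref{sec:constr}: the Gram matrices, sublattices and exact certificates. Your text is the right verification plan, but it does not by itself show that such lattices exist.

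The genuine gap is the row $n=10$. This row is purely analytic, and your sketch both misidentifies the blocks and omits the decisive check.

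(a) $(3+\omega)^2E_8$ has index $|(3+\omega)^2|^8=7^8$. The block is $(3+\omega)E_8$, of index $7^4=2401$.

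(b) The identity $D^2=\frac73\lambda_1^2$ does not yet give $d_1$. You also need $\rho(E_8)=\sqrt2$ (from $\lambda_1^2=2$, $R=1$), which gives $d_1^2=7/6$ and cost $6/7$ (Corollary~\ref{cor:widths}). That leaves a budget of only $1/7$, so the planar block must have $d_2^2>7$, and this is not automatic.

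(c) Your candidate fails. $|4+\omega|^2=13$, and $19$ is prime, so $19$ is not a multiple of it. Moreover $(4+\omega)\Z[\omega]$ has $D^2=4\cdot\frac{19}{12}$ and $d^2=\frac{19}{4}<7$. The paper uses $(5+2\omega)\Z[\omega]$, with $N(5+2\omega)=19$. The point $(5+2\omega)/2=(2,\sqrt3/2)$ lies in the normal cone of the vertex $(1/2,1/(2\sqrt3))$ of $H_1$. Hence $D^2=4\cdot\frac{31}{12}=\frac{31}{3}$, $d_2^2=(31/3)/(4/3)=31/4$, and $\frac67+\frac4{31}=\frac{214}{217}<1$.

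(d) The inequality must be strict, not $\le1$ as you wrote. When $\sum_i d_i^{-2}=1$ the optimal width is exactly $1$, and Proposition~\ref{prop:crit} does not apply. The resulting width $(214/217)^{-1/2}$ is exactly the $\ell_0=\sqrt{217/214}$ of the table.

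This computation, which you defer as something that ``needs to be checked'', is the entire proof of $45619$.
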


\begin{table}[htbp]\centering\small
\caption{The five proven bounds; $\ell_0$ is the proven width of the
forbidden segment ($<x$: proven for every $\ell<x$). Previous bounds are
from \cite{ABPR}.}
\label{tab:main}
\begin{tabular}{@{}c r r l l l@{}}
\toprule
$n$ & previous & new & $\ell_0$ & proof & where \\
\midrule
$4$  & $49$    & $\mathbf{43}$    & $1.00411$        & exact rational certificate & Theorem~\ref{thm:43}\\
$5$  & $140$   & $\mathbf{132}$   & $101/100$        & exact rational certificate & Theorem~\ref{thm:132}\\
$7$  & $1372$  & $\mathbf{1029}$  & $103/100$        & exact rational certificate & Theorem~\ref{thm:1029}\\
$9$  & $17253$ & $\mathbf{7203}$  & $<1.0166127$     & exact rational certificate & Theorem~\ref{thm:7203}\\
$10$ & $3^{10}$& $\mathbf{45619}$ & $<\sqrt{217/214}$& analytic (product rule) & Theorem~\ref{thm:45619}\\
\bottomrule
\end{tabular}
\end{table}

For $n=4,5,7,9$ we exhibit a rational Gram matrix and an integer sublattice
and reduce the inequality $d>\ell_0$, by one protocol
(Proposition~\ref{prop:protocol}), to finitely many inequalities between
rational numbers verified in exact arithmetic; floating point only guided
the search. For $n=10$ the proof is analytic: the \emph{Eisenstein identity}
of Section~\ref{sec:identity} gives the exact width $\sqrt{7/6}$ of the
block $E_8/2401$, and the \emph{product rule} $\sum_i1/d_i^2\le1$ leaves
room for a planar block with $19$ colors ($6/7+4/31<1$). The same rule gives
$4\cdot7^{12}$ and $19\cdot7^{12}$ colors in $\R^{25}$ and $\R^{26}$.
Section~\ref{sec:floor} shows that no sublattice of $E_8$ of index below
$2401$ is admissible. All code, exact certificates and data are open at
\url{https://github.com/ivaleo/Chromatic} (MIT license), together with the
detailed Russian manuscript (search algorithms, ladders of widths, negative
screens, and numerical candidates such as an index-$28812$ coloring of
$\R^{10}$, which we do \emph{not} claim as a bound because its diameter
certificate is numerical).

\section{Width and the certificate protocol}\label{sec:method}

Let $\Lambda\subset\R^n$ be a lattice, $\Gamma\subseteq\Lambda$ a sublattice
of index $k=[\Lambda:\Gamma]$, and $V_0=\{x:\|x\|\le\|x-b\|\ \forall
b\in\Lambda\}$ the Voronoi cell of the origin, a centrally symmetric convex
polytope with $\diam V_0=2R$, $R$ the covering radius. Color the cell $v+V_0$
by the coset $v+\Gamma$. Two points of the same color lie either in one cell
or in cells whose centers differ by $v\in\Gamma\setminus\{0\}$. Put
\begin{equation}\label{eq:defs}
D(v)=\dist\bigl(V_0,\,v+V_0\bigr),\qquad
D(\Gamma)=\min_{v\in\Gamma\setminus\{0\}}D(v),\qquad
d(\Lambda,\Gamma)=\frac{D(\Gamma)}{\diam V_0};
\end{equation}
the open interval $(\diam V_0,D(\Gamma))$ is then free of same-colored
distances.

\begin{proposition}\label{prop:crit}
If $d(\Lambda,\Gamma)>1$, then $\chi(\R^n,[1,\ell])\le[\Lambda:\Gamma]$ for
all $\ell<d(\Lambda,\Gamma)$; in particular $\chi(\R^n)\le[\Lambda:\Gamma]$.
\end{proposition}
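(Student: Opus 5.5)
The coloring is the one just described. Each point $x\in\R^n$ lies in some cell $v+V_0$ with $v\in\Lambda$, and we give $x$ the color of the coset $v+\Gamma$. Points on shared boundaries belong to several cells, so we fix the choice by any rule, for instance the lexicographically smallest such $v$. The coloring uses $k=[\Lambda:\Gamma]$ colors. We must show that two points $x,y$ of the same color never satisfy $\|x-y\|\in[1,\ell]$. Since the hypothesis and conclusion are invariant under scaling, we rescale so that $\diam V_0$ is slightly less than $1$. The plan is to take $\diam V_0=1/\ell'$ with $\ell<\ell'<d(\Lambda,\Gamma)$. After rescaling, $D(\Gamma)=d(\Lambda,\Gamma)\diam V_0=d(\Lambda,\Gamma)/\ell'>1$.

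Next take $x\in v+V_0$ and $y\in w+V_0$ of the same color, where $x$ and $y$ are assigned to the cells of $v$ and $w$. Then $u=w-v\in\Gamma$, and there are two cases. If $u=0$, both points lie in the same cell, so $\|x-y\|\le\diam V_0=1/\ell'<1$. If $u\ne0$, translating by $-v$ gives $\|x-y\|\ge\dist(V_0,u+V_0)=D(u)\ge D(\Gamma)$. The minimum defining $D(\Gamma)$ is attained, because $D(u)\ge\|u\|-2R$ and $\Gamma$ is discrete, so only finitely many $u$ compete. Hence $\|x-y\|\ge d(\Lambda,\Gamma)/\ell'>1\cdot(\ell/\ell)$, and we need $\|x-y\|>\ell$. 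This requires choosing the scale more carefully, which we do next.

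We scale so that $\diam V_0=s$ with $s<1$ and $s\,d(\Lambda,\Gamma)>\ell$. Such an $s$ exists exactly when $\ell<d(\Lambda,\Gamma)$: take $s\in(\ell/d(\Lambda,\Gamma),1)$, an interval that is nonempty since $\ell<d$. Then same-cell distances are at most $s<1$, and distances between different cells are at least $s\,d(\Lambda,\Gamma)>\ell$. So no same-colored distance lies in $[1,\ell]$, and $\chi(\R^n,[1,\ell])\le k$. For the final claim, the hypothesis $d>1$ allows $\ell=1$, and \eqref{eq:mono} gives $\chi(\R^n)\le k$.

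The only step that needs care is the tie-breaking at cell boundaries. The bound $\|x-y\|\ge D(u)$ uses only the fact that $x$ belongs to the closed cell it is assigned to, so any consistent assignment works. The argument is otherwise routine.
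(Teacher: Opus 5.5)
Your proof is correct and follows the paper's argument: choose the scale $s=\diam V_0\in(\ell/d,1)$, so that same-cell distances are below $1$ and distances between distinct same-colored cells exceed $\ell$, then apply \eqref{eq:mono} with $\ell=1$. The boundary tie-breaking and the attainment of the minimum are harmless extra detail, and you should delete the abandoned first scaling $\diam V_0=1/\ell'$ (with its garbled inequality) from your second paragraph.
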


\begin{proof}
A scale $s>0$ with $s\diam V_0<1$ and $sD(\Gamma)>\ell$ exists iff
$\ell<d$; for $s\Lambda$ the free interval contains $[1,\ell]$. Use
\eqref{eq:mono} for $\ell=1$.
\end{proof}

\begin{lemma}[midpoint lemma; {\cite[Lemma~2]{Ivanov2011}}]\label{lem:mid}
$D(v)=2\dist(v/2,\,V_0)$.
\end{lemma}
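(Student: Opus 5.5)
The plan is to rewrite $D(v)$ as the distance from the origin to a Minkowski difference, and then use the central symmetry of $V_0$. By definition,
\[
D(v)=\min\{\|x-y\|:\ x\in V_0,\ y\in v+V_0\}.
\]
Write $y=v+z$ with $z\in V_0$; then $x-y=-(v-(x-z))$. As $x,z$ range over $V_0$, the vector $x-z$ ranges over $V_0-V_0$. Hence
\[
D(v)=\dist\bigl(v,\,V_0-V_0\bigr),
\]
and the minimum is attained because $V_0$ is compact.

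The key step is to show $V_0-V_0=2V_0$. Since $V_0$ is centrally symmetric ($x\in V_0$ iff $-x\in V_0$, because $\Lambda=-\Lambda$), we have $V_0-V_0=V_0+V_0$. Since $V_0$ is convex, $V_0+V_0=2V_0$. Indeed, $2x=x+x$ gives $2V_0\subseteq V_0+V_0$. Conversely, $x+z=2\cdot\frac{x+z}{2}$ with $\frac{x+z}{2}\in V_0$, which gives the reverse inclusion.

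Therefore $D(v)=\dist(v,2V_0)$. Scaling by $1/2$ gives $\dist(v,2V_0)=2\dist(v/2,V_0)$, which is the claim.

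Every step is elementary; the one genuine point is the identity $V_0-V_0=2V_0$. It requires both the symmetry of $V_0$ (which follows from the symmetry of $\Lambda$) and its convexity (as an intersection of half-spaces), so both facts will be checked explicitly.
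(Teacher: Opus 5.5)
Your proof is correct and is essentially the paper's argument written in Minkowski-sum form. The paper works pointwise: it puts $x'=v-y\in V_0$ (central symmetry) and uses convexity to get $\|x-y\|=2\|(x+x')/2-v/2\|\ge2\dist(v/2,V_0)$, which is your inclusion $V_0-V_0\subseteq2V_0$; it then attains equality with $x$ the nearest point of $V_0$ to $v/2$ and $y=v-x$, which is your reverse inclusion $2V_0\subseteq V_0-V_0$.
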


\begin{proof}
If $x\in V_0$, $y\in v+V_0$, then $x'=v-y\in V_0$ and
$\|x-y\|=2\|(x+x')/2-v/2\|\ge2\dist(v/2,V_0)$ by convexity; for the point
$x\in V_0$ closest to $v/2$ and $y=v-x$ equality holds.
\end{proof}

Since $V_0$ lies in the ball of radius $R$, $D(v)\ge\|v\|-\diam V_0$, so
$D(v)\ge\ell\diam V_0$ can only fail inside the \emph{window}
\begin{equation}\label{eq:window}
\|v\|<(1+\ell)\diam V_0=2(1+\ell)R ,
\end{equation}
a provably complete search bound.

\begin{proposition}[certificate protocol]\label{prop:protocol}
Let $Q\in\mathrm{Mat}_n(\Q)$ be symmetric with positive leading principal
minors, $\Lambda=\Z^n$ with $\langle x,y\rangle=x^{\mathsf T}Qy$, and
$\Gamma=C\Z^n$ for an integer matrix $C$. Suppose a finite computation in
exact fraction arithmetic has established:
\textup{(i)} $|\det C|=k$;
\textup{(ii)} for each nonzero class of $\Lambda/2\Lambda$ the shortest
representatives are found, and $V_0$ is cut out by
$\langle x,v\rangle\le\frac12\langle v,v\rangle$ over the classes with a
unique pair $\pm v$ (Voronoi's relevant vectors
\cite[Ch.~21]{ConwaySloane});
\textup{(iii)} all vertices of $V_0$ are enumerated, completeness being
proved by solving all $n$-subsets of facets or by exact closure along the
$1$-skeleton, and $R^2=\max\langle x,x\rangle$ over the vertices;
\textup{(iv)} all $v\in\Gamma\setminus\{0\}$ with $\langle v,v\rangle<W$ are
listed, where $W\ge(2(1+\ell_0)R)^2$ is rational;
\textup{(v)} for each such $v$ a point $x^*\in V_0$ and multipliers $\mu\ge0$
of the active facets with $v/2-x^*=\sum_j\mu_jv_j$ are exhibited (the KKT
conditions of the projection), whence $D(v)^2=4\|v/2-x^*\|^2$ exactly;
\textup{(vi)} $D_{\min}^2-\ell_0^2\cdot4R^2>0$, $D_{\min}$ the minimum over
the window.
Then $d(\Lambda,\Gamma)>\ell_0$ and $\chi(\R^n,[1,\ell])\le k$ for all
$\ell\le\ell_0$.
\end{proposition}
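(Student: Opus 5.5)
We verify each step against the definitions \eqref{eq:defs} and then apply Proposition~\ref{prop:crit}. Three facts have to be checked. First, the polytope assembled in (ii)--(iii) is exactly $V_0$ and its radius is the true covering radius $R$. Second, every $v\in\Gamma\setminus\{0\}$ that could violate $D(v)>\ell_0\diam V_0$ appears in the list (iv). Third, the numbers produced in (v) are the exact values $D(v)^2$. Once these hold, (vi) gives $D(\Gamma)^2>\ell_0^2(\diam V_0)^2$, and since the window inequality $D(v)\ge\|v\|-\diam V_0$ covers all $v$ outside the list, we get $d(\Lambda,\Gamma)>\ell_0$.

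\emph{Step 1: the cell.} The leading principal minors of $Q$ are positive, so $Q$ is positive definite by Sylvester's criterion, and $\langle\cdot,\cdot\rangle$ is an inner product. By Voronoi's theorem \cite[Ch.~21]{ConwaySloane}, a vector $v$ is relevant iff $\pm v$ are the unique shortest vectors in the class $v+2\Lambda$. Hence the half-spaces listed in (ii) cut out exactly $V_0$; the conditions for all other $b\in\Lambda$ are implied by these. $V_0$ is a bounded polytope, so it is the convex hull of its vertices. Every vertex is the unique solution of some $n$ facet equations that satisfies all the other inequalities, so enumerating all $n$-subsets finds every vertex. Alternatively, the $1$-skeleton of a polytope is connected, so exact closure along edges from one vertex reaches them all. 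A convex function attains its maximum over a polytope at a vertex, so $R^2=\max\langle x,x\rangle$ over vertices. By central symmetry, $\diam V_0=2R$: the segment from $x$ to $-x$ has length $2R$, and no two points of the ball of radius $R$ are farther apart.

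\emph{Step 2: the window and the distances.} Suppose $v\notin$ the list. Then $\langle v,v\rangle\ge W\ge(2(1+\ell_0)R)^2$, so $\|v\|\ge(1+\ell_0)\diam V_0$. Since $V_0$ lies in the ball of radius $R$, this gives $D(v)\ge\|v\|-2R\ge\ell_0\diam V_0$. Equality could only occur at the boundary of the window; I will handle this by requiring strict slack, i.e.\ by taking $W$ strictly larger, or by noting $D(v)\ge\|v\|-2R$ directly. For the listed $v$ I use Lemma~\ref{lem:mid}: $D(v)=2\dist(v/2,V_0)$. Now $x^*\in V_0$, and $v/2-x^*$ lies in the cone of outward normals of the facets active at $x^*$, which is the normal cone of $V_0$ at $x^*$ in the $Q$-geometry. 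These are the KKT conditions for the convex quadratic program $\min_{x\in V_0}\|v/2-x\|^2$, and they are sufficient for optimality. So $x^*$ is the projection, and $D(v)^2=4\|v/2-x^*\|^2$ holds exactly. All quantities are rational, so the comparisons are exact.

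\emph{Step 3: conclusion.} Combining the two steps, $D(\Gamma)\ge\min\bigl(D_{\min},\ell_0\diam V_0\bigr)$. Strictness is the only delicate point: outside the window the bound $\|v\|-2R$ is not strict at $\|v\|^2=W$. To get $D(\Gamma)>\ell_0\diam V_0$, I will note that $\Gamma$ is discrete, so only finitely many norms lie near the threshold. Alternatively, the bound $D(v)\ge\|v\|-\diam V_0$ is itself strict unless $V_0$ meets the sphere of radius $R$ at antipodal points aligned with $v$. The cleaner fix is to reduce to Proposition~\ref{prop:crit}: we get $d\ge\ell_0$ with $d>1$, and $\chi(\R^n,[1,\ell])\le k$ for $\ell<d$. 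For $\ell=\ell_0$ the claimed strict inequality needs this boundary case excluded, which I expect to be the main (if minor) obstacle. The index equals $|\det C|$ by (i). The only genuinely nontrivial ingredients are the correctness of Voronoi's relevant-vector criterion and the completeness of vertex enumeration; both are standard and cited, so the proof is a bookkeeping verification.
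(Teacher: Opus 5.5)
Your argument is the paper's own proof, which is exactly this chain: Voronoi's theorem for (ii), the maximum of a convex function at a vertex plus central symmetry for (iii), the window \eqref{eq:window} for (iv), sufficiency of the KKT conditions plus Lemma~\ref{lem:mid} for (v), and then Proposition~\ref{prop:crit}. Your Step~1 and the projection part of Step~2 are correct as written.

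The strictness problem you flag is genuine, and the paper's proof passes over it. For an unlisted $v$, \eqref{eq:window} gives only $D(v)\ge\ell_0\diam V_0$. Equality holds precisely when $\langle v,v\rangle=W=(2(1+\ell_0)R)^2$ and $Rv/\|v\|\in V_0$.

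Your appeal to discreteness does not help, and no argument can, because under the hypotheses as written the conclusion $d>\ell_0$ can fail. Take $\Lambda=\Z^2$ with $Q=I$, so $V_0=[-\tfrac12,\tfrac12]^2$ and $R^2=\tfrac12$. Let $\Gamma$ be spanned by $(4,-1)$ and $(-1,4)$ (index $15$), and put $\ell_0=2$ and $W=18=(2(1+\ell_0)R)^2$.
\begin{itemize}
\item Since $\|a(4,-1)+b(-1,4)\|^2=17a^2-16ab+17b^2\ge9(a^2+b^2)$, the only nonzero vectors of $\Gamma$ with norm $<18$ are $\pm(4,-1)$ and $\pm(-1,4)$.
\item Each of these has $D(v)^2=9$, so (i)--(vi) all hold; (vi) reads $9-8>0$.
\item But $(3,3)\in\Gamma$ has norm exactly $18$ and $D\bigl((3,3)\bigr)^2=4\dist\bigl((\tfrac32,\tfrac32),V_0\bigr)^2=8=\ell_0^2\diam^2V_0$.
\item Hence $d(\Lambda,\Gamma)=\ell_0$, not $>\ell_0$.
\end{itemize}
So with (iv) as stated one only gets $d\ge\ell_0$, and hence the bound only for $\ell<\ell_0$ when $\ell_0>1$.

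The repair is the first one you mention. Require $W>(2(1+\ell_0)R)^2$, or list all $v$ with $\langle v,v\rangle\le W$. Then every unlisted $v$ has $\|v\|>2(1+\ell_0)R$, hence $D(v)\ge\|v\|-2R>\ell_0\diam V_0$, and your proof is complete.

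The applications in dimensions $7$ and $9$ of Section~\ref{sec:constr} are unaffected. There the window actually used is the closed one, $\|v\|^2\le2(D_*^2+\diam^2V_0)$, which lies strictly beyond the threshold.
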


\begin{proof}
(ii) is Voronoi's theorem; (iii): the maximum of $\langle x,x\rangle$ over a
polytope is attained at a vertex and $\diam V_0=2R$; (iv) is
\eqref{eq:window}; (v): KKT conditions are sufficient for projection onto a
polytope, and $D(v)=2\dist(v/2,V_0)$ by Lemma~\ref{lem:mid}; (vi) gives
$D(\Gamma)>\ell_0\diam V_0$, and Proposition~\ref{prop:crit} applies.
\end{proof}

Floating point serves only as an oracle proposing vertices and active sets;
every accepted quantity is recomputed over $\Q$, so an oracle error can stop
a verification but cannot make it wrong.

\section{Four explicit constructions}\label{sec:constr}

Each theorem below is proved by the protocol of
Proposition~\ref{prop:protocol}; we give the data and the quantities the
protocol produces. The full lists of vertices, window vectors and KKT
certificates are in the files named in Section~\ref{sec:repro}.

\subsection{Dimension 4: $\chi(\R^4)\le43$}\label{ssec:43}

Let $S\in\mathrm{GL}_4(\Z)$ have order $3$ and characteristic polynomial
$\Phi_3^2$. A lattice with $S\Lambda=\Lambda$ is a $\Z[\omega]$-module of
rank $2$ ($\omega=e^{2\pi i/3}$), an \emph{Eisenstein} lattice, and the
$S$-invariant forms are the real parts of Hermitian forms
$\bigl(\begin{smallmatrix}a&c\\ \bar c&b\end{smallmatrix}\bigr)$; in the
$\Z$-basis $(e_1,\omega e_1,e_2,\omega e_2)$, with $u=\operatorname{Re}c$
and $s=\tfrac{\sqrt3}{2}\operatorname{Im}c$, the Gram matrix is
\begin{equation}\label{eq:eisform}
Q(a,b,u,s)=\begin{pmatrix}
a & -\tfrac a2 & u & -\tfrac u2-s\\[1pt]
-\tfrac a2 & a & -\tfrac u2+s & u\\[1pt]
u & -\tfrac u2+s & b & -\tfrac b2\\[1pt]
-\tfrac u2-s & u & -\tfrac b2 & b
\end{pmatrix}.
\end{equation}
An $S$-invariant sublattice is a $\Z[\omega]$-submodule, so its index is the
norm of an ideal; $43=N(7+\omega)$, and there are exactly $88$ submodules
of index $43$ (against $81\,400$ sublattices). Restricting the search to
this three-parameter family (instead of nine parameters) produced the
construction; in the full space of quaternary forms the search had stalled
below the threshold.

Let $\Lambda_\star=\Z^4$ with the form \eqref{eq:eisform},
\begin{equation}\label{eq:gram43}
a=1,\qquad b=\frac{39261}{25000},\qquad u=\frac{37381}{100000},\qquad
s=\frac{3141}{12500}\qquad(200\,000\,Q\in\mathrm{Mat}_4(\Z)),
\end{equation}
and let $\Gamma_\star$ be generated by the rows of
\begin{equation}\label{eq:hnf43}
T=\begin{pmatrix}1&0&0&41\\0&1&0&14\\0&0&1&37\\0&0&0&43\end{pmatrix}
\qquad(\text{coordinates in the basis of }\Lambda_\star),\qquad\det T=43 .
\end{equation}
Then $\lambda_1(\Lambda_\star)^2=1$ and
\[
\begin{aligned}
\diam^2V_0&=\frac{5374343884337}{2019775849050}=2.660861544\ldots,\\
D(\Gamma_\star)^2&=\frac{7396075258066147}{2756867811550000}=2.682781969\ldots,\\
d(\Lambda_\star,\Gamma_\star)^2
&=\frac{298768283679964998359742207}{296327113258585430253847000}
=1.008238093\ldots,\qquad d=1.004110598\ldots
\end{aligned}
\]

\begin{theorem}\label{thm:43}
$\chi(\R^4,[1,\ell])\le43$ for all $1\le\ell\le\ell_0=1.00411$; in
particular, $\chi(\R^4)\le43$.
\end{theorem}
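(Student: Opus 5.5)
The plan is to apply the certificate protocol of Proposition~\ref{prop:protocol} to $Q=Q(a,b,u,s)$ with the rational data \eqref{eq:gram43} and $C=T$ from \eqref{eq:hnf43}, and then conclude with Proposition~\ref{prop:crit}. The verification runs in the following order.

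\emph{Positivity and index.} First we check that $Q$ has positive leading principal minors, which is exact rational arithmetic on $200\,000\,Q$. Condition~(i) is immediate from the triangular form of $T$: $\det T=43$. We also confirm that $\Gamma_\star$ is $S$-invariant, so that it is a $\Z[\omega]$-submodule. This is not logically needed for the theorem, but it serves as a consistency check.

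\emph{Voronoi cell.} For~(ii) we enumerate the $15$ nonzero classes of $\Lambda_\star/2\Lambda_\star$. For each class we find the shortest representatives by a bounded Fincke--Pohst search over $\Q$. A class contributes a facet pair exactly when its minimum is attained by a unique pair $\pm v$. The $S$-symmetry groups the relevant vectors into orbits of size $3$, i.e.\ $6$ vectors up to sign. For~(iii) we solve every $4$-subset of the facet inequalities exactly, keep the feasible solutions as vertices, and take $R^2=\max\langle x,x\rangle$. The claim is that this yields $4R^2=\diam^2V_0=5374343884337/2019775849050$.

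\emph{Window and projections.} With $\ell_0=1.00411$ we choose a rational $W\ge(2(1+\ell_0))^2R^2$, for instance $W=4\cdot(2.01)^2R^2$ rounded up. We then list all $v\in\Gamma_\star\setminus\{0\}$ with $\langle v,v\rangle<W$, which is a finite lattice-point enumeration in the Gram matrix $TQT^{\mathsf T}$. For each such $v$ we compute the projection $x^*$ of $v/2$ onto $V_0$. A floating-point quadratic program proposes the active facets, and we then solve the equality-constrained KKT system exactly over $\Q$. We check that $x^*\in V_0$ and that all multipliers $\mu_j$ are nonnegative, and record $D(v)^2=4\|v/2-x^*\|^2$. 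The minimum should equal $7396075258066147/2756867811550000$.

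\emph{Conclusion.} Condition~(vi) then reads $D_{\min}^2-\ell_0^2\,\diam^2V_0>0$. This holds because $d^2=1.008238\ldots>1.00411^2=1.0082368\ldots$, and we verify the inequality as an exact comparison of rationals. Proposition~\ref{prop:protocol} gives $\chi(\R^4,[1,\ell])\le43$ for $\ell\le\ell_0$, and \eqref{eq:mono} with $\ell=1$ gives $\chi(\R^4)\le43$.

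The main obstacle is the projection step~(v). The margin $d^2-\ell_0^2\approx1.3\cdot10^{-6}$ is tiny, so a wrong active set gives either an infeasible $x^*$ or a negative multiplier. In that case we re-run the oracle with faces (vertices, edges, $2$-faces) enumerated exhaustively. For a $4$-polytope with few facets this is cheap: we project onto the affine hull of each face, solving exactly, and accept the first candidate that passes both KKT checks.
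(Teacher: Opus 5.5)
Your proposal is correct and takes the paper's route: the certificate protocol of Proposition~\ref{prop:protocol} applied to \eqref{eq:gram43} and \eqref{eq:hnf43}, with facets from the $15$ classes of $\Lambda_\star/2\Lambda_\star$, all $\binom{30}{4}$ exact $4\times4$ systems for the vertices and $\diam^2V_0$, exact KKT certificates on the window, and the rational margin $D(\Gamma_\star)^2-\ell_0^2\diam^2V_0=3.196\cdot10^{-6}>0$. Two cosmetic points: since $\Gamma_\star$ is generated by the rows of $T$, the protocol's $C$ is $T^{\mathsf T}$ (your Gram matrix $TQT^{\mathsf T}$ already uses this), and an exhaustive fallback for the projection should include the facets, not only faces of dimension at most $2$.
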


\begin{proof}
The $15$ nonzero classes of $\Lambda_\star/2\Lambda_\star$ give $30$ facets
(the maximum in $\R^4$); all $\binom{30}{4}=27\,405$ rational $4\times4$
systems yield exactly $120$ vertices ($f$-vector $(120,240,150,30)$,
$\vol V_0=\det\Lambda_\star$) and $\diam^2V_0$ above. The window
\eqref{eq:window} for $\ell_0$ is enumerated, each $\dist(v/2,V_0)$ is
certified by KKT multipliers in fractions, and the minimum of $D^2$ is the
value above, attained on three pairs $\pm v$ forming one orbit of
$\Aut(\Lambda_\star)\cong\Z/6$. Margin:
\[
D(\Gamma_\star)^2-\ell_0^2\,\diam^2V_0
=\frac{3559631641205182621936510913}{1113651004958403329305500000000000}
=3.196\cdot10^{-6}>0 .
\qedhere
\]
\end{proof}

A second, independent certificate, which uses a supporting direction for
each window vector instead of the projection, bounds
\[
D(\Gamma_\star)^2\ \ge\ \frac{869307639027355969}{324032161065600000}
=2.682781969\ldots
\]
from below, and an exhaustive scan of all $81\,400$ sublattices of index
$43$ confirms that \eqref{eq:hnf43} maximizes $d$ on $\Lambda_\star$. The
point \eqref{eq:gram43} is a rationalization (denominator $10^5$) of the
numerical optimum $d=1.00411205721\ldots$ of the family.

\subsection{Dimension 5: $\chi(\R^5)\le132$}\label{ssec:132}

The previous bound $140$ was proved by ABPR to be minimal among sublattices
of $A_5^*$; the construction below changes the parent form. Let
$\Lambda=\Z^5$ with $\langle x,y\rangle=x^{\mathsf T}Qy$,
\[
Q=\frac1{100000}
\begin{pmatrix}
79327&86284&69597&36140&26501\\
86284&254538&216808&101257&102135\\
69597&216808&296568&163667&140527\\
36140&101257&163667&181733&71653\\
26501&102135&140527&71653&148076
\end{pmatrix}
\]
with the leading principal minors
\[
79327,\quad 12746807270,\quad 1422467480626358,\quad
124437401434750889345,\quad 9999935596575703407615413,
\]
and $\Gamma=\ker\varphi$ for the surjective homomorphism
\[
\varphi(x)=88x_1+89x_2+127x_3+57x_4+3x_5\pmod{132},
\]
so $[\Lambda:\Gamma]=132$ (Smith form $(1,1,1,1,132)$; $\varphi$ combines
characters mod $11$, $3$ and $4$).

\begin{theorem}\label{thm:132}
$\chi(\R^5,[1,\ell])\le132$ for all $1\le\ell\le101/100$; in particular,
$\chi(\R^5)\le132$.
\end{theorem}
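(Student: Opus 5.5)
The plan is to apply Proposition~\ref{prop:protocol} directly with the given $Q$, with $\Gamma=\ker\varphi$ presented as $C\Z^5$, and with $\ell_0=101/100$. First, positive definiteness of $Q$ follows from the five listed leading principal minors being positive. For (i), I will write down an integer basis of $\ker\varphi$. Since $\gcd(3,132)=3$ and $\gcd(88,89,127,57,3,132)=1$, $\varphi$ is surjective, so the index is $132$. A Hermite normal form basis makes this concrete: because $\varphi(e_5)=3$ is not a unit mod $132$, I will instead use a coordinate with a unit coefficient, e.g.\ $89$ (coprime to $132$), and solve for $x_2$. That yields rows $e_i-c_ie_2$ for $i\ne2$ together with $132e_2$, and $|\det C|=132$ can be checked directly.

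For (ii) I will enumerate the $31$ nonzero classes of $\Lambda/2\Lambda$ and, in each, find the shortest vectors by exact enumeration with Fincke--Pohst bounds in rational arithmetic. The classes with a unique pair $\pm v$ give the relevant vectors, at most $62$ facets in $\R^5$; for a form in general position I expect all $31$ classes to contribute. For (iii) I will take vertices proposed by floating point and then close them exactly along the $1$-skeleton. At each vertex I recompute the rational solution of the $5\times5$ system, check simplicity, and follow each edge to its other endpoint until no new vertices appear. I will confirm completeness by checking $\sum\vol=\det\Lambda$ via a cone decomposition, or by brute force over all $\binom{62}{5}\approx7\cdot10^6$ systems, which is feasible. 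Then $R^2$ is the exact maximum of $\langle x,x\rangle$ over the vertices.

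For (iv) I choose a rational $W\ge(2\cdot\tfrac{201}{100}R)^2$, for instance by rounding $4R^2\cdot(201/100)^2$ up. I then enumerate all $v\in\Gamma\setminus\{0\}$ with $\langle v,v\rangle<W$ by Fincke--Pohst on the Gram matrix $C^{\mathsf T}QC$, and use $\pm$ symmetry to halve the list. For (v), for each window vector I compute the projection of $v/2$ onto $V_0$ by a floating QP to guess the active facet set. I then solve exactly for $x^*$ on the affine span of those facets and for the multipliers $\mu$. I verify in $\Q$ that $x^*$ satisfies every facet inequality and that $\mu\ge0$, and record $D(v)^2=4\|v/2-x^*\|^2$. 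Finally, (vi) reduces to a single rational inequality $D_{\min}^2-(101/100)^2\cdot4R^2>0$.

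The main obstacle I expect is step (v). Each window vector needs its own correct active set, and in $\R^5$ the nearest point may lie on a low-dimensional face where several facets are nearly active. Degenerate cases also arise when some $\mu_j=0$ or when the projection is nonunique in its face description. My first remedy is to drop multipliers that vanish and re-solve on the reduced active set. If this fails, I fall back on the dual certificate mentioned after Theorem~\ref{thm:43}: a supporting direction $w$ with $\langle w,v/2\rangle-\max_{x\in V_0}\langle w,x\rangle$ computed over the exact vertex list. This only lower-bounds $D(v)$, which suffices for (vi). The secondary risk is a thin margin in (vi), but the rationalized $Q$ was presumably chosen with $d$ comfortably above $1.01$.
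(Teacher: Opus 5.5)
Your plan is the paper's proof: it applies the certificate protocol of Proposition~\ref{prop:protocol} to this $Q$, $\Gamma=\ker\varphi$ and $\ell_0=101/100$, and the paper reports $62$ facets, $720$ vertices, the exact $R^2$, a window $\|v\|^2<12.4984\ldots$ containing $19$ pairs, and the minimum $D_{\min}^2$ at $\pm(2,0,1,-3,0)$. The theorem, however, rests on the outcome of that computation, so your closing ``presumably'' must be replaced by the exact positive value of $D_{\min}^2-(101/100)^2\,4R^2$, which the paper records (giving $d=1.0108977\ldots>1.01$).
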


\begin{proof}
The cell has $62$ facets and $720$ vertices, all solved over $\Q$, with
\[
R^2=\frac{3093570095915736710467707313641}{3999974238630281363046165200000}.
\]
For $\ell=101/100$ the window \eqref{eq:window} is $\|v\|^2<12.4984\ldots$;
a Fincke--Pohst enumeration over an LLL-reduced basis of $\Gamma$ leaves
$38$ nonzero vectors ($19$ pairs) in it. For all of them the projection onto
$V_0$ is certified over $\Q$; the minimum is attained at
$\pm(2,0,1,-3,0)$ (coordinates in the basis of $\Lambda$):
\begin{gather*}
D_{\min}^2=\frac{1634114770527727}{516898614620000},\\
D_{\min}^2-\Bigl(\frac{101}{100}\Bigr)^2 4R^2
=\frac{1450500657237822255902962097875907780124229}
{258447642807960215367421006872956903000000000}>0 ,
\end{gather*}
so $D_{\min}/\diam V_0=1.010897714548927\ldots>1.01$.
\end{proof}

An independent verifier that uses neither Qhull nor a floating-point
enumerator reproduces the same $R^2$, $D_{\min}^2$ and KKT certificates.

\subsection{Dimension 7: $\chi(\R^7)\le1029$}\label{ssec:1029}

\emph{Lamination} builds a coloring of $\R^{n}$ from a coloring
$(\Lambda',\Gamma'=\ker\psi)$ of $\R^{n-1}$: for a shift $c$, a height $t>0$,
a gluing $a\in\Lambda'/\Gamma'$ and a layer modulus $m$,
\[
\Lambda=\bigl\{(x+ic,\;it):\ x\in\Lambda',\ i\in\Z\bigr\},\qquad
\Gamma=\bigl\{(x+ic,\;it):\ \psi(x)+ia=0,\ m\mid i\bigr\},\qquad
[\Lambda:\Gamma]=[\Lambda':\Gamma']\,m .
\]
The squared diameter of the cell grows by at most $t^2$, and the distances
between ``horizontal'' same-colored cells (zero layer coordinate) do not
decrease, so the excess width of the base pays for the layer height. The
base here is the Eisenstein coloring $E_6^*/343$, $\Gamma'=(3+\omega)E_6^*$,
of width $\sqrt{7/6}$ (Corollary~\ref{cor:widths}), and $m=3$. In a
$\Z$-basis whose first six vectors span the base and whose seventh is the
layer vector $(c,t)$, the rational point we verify is
\begin{equation}\label{eq:gram1029}
\begin{gathered}
Q=\begin{pmatrix}\tfrac34M&g\\ g^{\mathsf T}&17897/10^4\end{pmatrix},\qquad
M=\begin{pmatrix}
4&2&0&-3&2&1\\ 2&4&3&0&1&2\\ 0&3&6&3&0&3\\
-3&0&3&6&-3&0\\ 2&1&0&-3&4&2\\ 1&2&3&0&2&4
\end{pmatrix},\\[3pt]
10^4g^{\mathsf T}=(-8632,\,-4197,\,1601,\,6617,\,-5275,\,4434),
\end{gathered}
\end{equation}
where $\tfrac34M$ is a Gram matrix of $E_6^*$ with $\lambda_1^2=3$, and
$\Gamma=C\Z^7$ with $C$ upper triangular, diagonal $(7,1,7,1,7,1,3)$, the
only nonzero off-diagonal entries $C_{12}=C_{34}=C_{56}=-5$, and
$\det C=1029$ (the three blocks
$\bigl(\begin{smallmatrix}7&-5\\0&1\end{smallmatrix}\bigr)$ realize the
ideal of norm $7$ in the three complex coordinates of the base).

\begin{theorem}\label{thm:1029}
For $\Lambda=\Z^7$ with the form \eqref{eq:gram1029} and $\Gamma=C\Z^7$,
\[
D(\Gamma)^2=7,\qquad
\diam^2V_0=\frac{96858928789031597}{14761819462500000},\qquad
d^2=\frac{103332736237500000}{96858928789031597},\quad d=1.032878\ldots,
\]
so $\chi(\R^7,[1,\ell])\le1029$ for all $1\le\ell\le103/100$; in particular,
$\chi(\R^7)\le1029$.
\end{theorem}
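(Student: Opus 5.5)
The theorem is an instance of the certificate protocol (Proposition~\ref{prop:protocol}), and I will run its six steps in order on the data \eqref{eq:gram1029}.

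\emph{Step (i): the index.} This is immediate: $C$ is upper triangular with diagonal $(7,1,7,1,7,1,3)$, so $\det C=7^3\cdot3=1029$.

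\emph{Step (ii): the facets of $V_0$.} First I check that $Q$ is positive definite. Its leading $6\times6$ block is $\tfrac34M$, a Gram matrix of $E_6^*$, and the Schur complement is $17897/10^4-g^{\mathsf T}(\tfrac34M)^{-1}g$, which I verify is positive over $\Q$. Next, for each of the $127$ nonzero classes of $\Lambda/2\Lambda$, I find the shortest representatives by a Fincke--Pohst enumeration with exact arithmetic. The classes with a unique pair $\pm v$ give the facets of $V_0$.

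\emph{Step (iii): the vertices and $R$.} I enumerate the vertices of $V_0$ by exact closure along the $1$-skeleton. I start from one vertex found by LP and pivot across each facet adjacent to each edge until no new vertices appear. All $7\times7$ solves are done over $\Q$, and at every vertex all facet inequalities are checked. From the vertices I take $R^2=\max\langle x,x\rangle$, and the statement asserts $4R^2=96858928789031597/14761819462500000$. This must match exactly.

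\emph{Step (iv): the window.} With $\ell_0=103/100$ the window \eqref{eq:window} is $\|v\|^2<(2.03)^2\cdot4R^2\approx27$. I list all nonzero $v\in\Gamma$ in it by Fincke--Pohst over an LLL-reduced basis of $C\Z^7$. I split the list by the layer coordinate $i\in3\Z$. For $i=0$ the vectors lie in $(3+\omega)E_6^*$, whose minimum norm is $7\cdot3/3\cdot\ldots$ under the scaling $\lambda_1^2=3$, so the horizontal vectors have $\|v\|^2\ge7$. For $i=\pm3,\pm6$ the norms are boosted by $9t^2$, where $t^2$ is the Schur complement above.

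\emph{Step (v): certifying $D(v)^2$.} For each window vector I exhibit a projection point $x^*\in V_0$ and nonnegative multipliers on the active facets solving $v/2-x^*=\sum\mu_jv_j$. All of this is checked over $\Q$, which gives $D(v)^2=4\|v/2-x^*\|^2$ exactly.

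I expect the minimum $D(\Gamma)^2=7$ to be attained at horizontal vectors $v$ of norm $7$ with $x^*=0$. This would say $v/2$ lies outside $V_0$ by exactly $\|v\|/2$, hence $D=\|v\|$. The reason is that $v/2$ is a multiple of a relevant vector direction lying at distance exactly half its norm beyond the facet plane pair. More precisely, I expect $v/2-x^*$ to lie in the normal cone at a face, reproducing the Eisenstein identity value $\sqrt{7/3}\,\lambda_1$ in the base scaled down.

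I will not rely on that heuristic. Each certificate is simply verified, and every non-horizontal vector must also be checked to give $D^2\ge7$.

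\emph{Step (vi): the margin.} Finally I compute $d^2=7/(4R^2)=103332736237500000/96858928789031597$ and check that $d^2-(103/100)^2>0$ exactly. Numerically $d^2\approx1.0668$ against $1.0609$, so the margin is comfortable. Proposition~\ref{prop:crit} then gives the conclusion.

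\emph{Main obstacle.} The hard part is step (iii), completeness of the vertex enumeration in dimension $7$. The cell may have up to $2(2^7-1)=254$ facets and thousands of vertices, so solving all $7$-subsets of facets is infeasible. I would therefore rely on $1$-skeleton closure, which is a correct completeness proof because the graph of a polytope is connected. Degeneracy is the danger: $E_6^*$ is highly symmetric, and the small perturbation $g$ may leave vertices lying on more than $7$ facets. At degenerate vertices I would handle the pivots by enumerating all edge directions of the vertex cone exactly, via the extreme rays of the cone cut out by the active constraints, rather than by simplex pivots.
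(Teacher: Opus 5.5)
Your route is the paper's: relevant vectors from the $127$ classes of $\Lambda/2\Lambda$, exact $1$-skeleton closure without assuming simplicity (the paper does meet $1\,600$ vertices with nine active facets), exact KKT certificates, and the final margin. However, step (iv) as written does not prove the exact claim $D(\Gamma)^2=7$. You enumerate only $\|v\|^2<(1+\ell_0)^2\diam^2V_0\approx27.04$. Outside that window, $D(v)\ge\|v\|-\diam V_0$ gives only $D(v)^2\ge\ell_0^2\diam^2V_0\approx6.961$, not $D(v)^2\ge7$. Vectors of $\Gamma$ with $27.04\le\|v\|^2<(\sqrt7+\diam V_0)^2\approx27.12$ are therefore never examined. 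So writing $d^2=7/\diam^2V_0$ in step (vi) is unjustified. You also get only $d\ge\ell_0$, not the strict $d>\ell_0$ that Proposition~\ref{prop:crit} needs at the endpoint $\ell=103/100$. The paper instead enumerates the \emph{safe window} $\|v\|^2\le2(D_*^2+\diam^2V_0)$ with $D_*^2=7$. This is a rational bound that is $\ge(\sqrt7+\diam V_0)^2$, and the window contains $74$ vectors. It captures every $v$ with $D(v)<\sqrt7$, so $D(\Gamma)^2=7$ follows exactly. The repair is simply to enlarge your window to this size.

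Your heuristic in step (v) is also wrong, though you rightly keep it out of the logic. In this scaling $\lambda_1^2(E_6^*)=3$, and multiplication by $3+\omega$ multiplies norms by $7$. So horizontal vectors of $\Gamma$ have $\|v\|^2\ge21$; your bound $\|v\|^2\ge7$ confuses $\|v\|^2$ with $D(v)^2$. Moreover, $x^*=0$ is impossible, because $0$ is an interior point of $V_0$ and cannot be the projection of an exterior point $v/2$. For $v=(3+\omega)w$ with $\|w\|^2=3$, the projection of $v/2$ is the hexagon vertex $\frac13(2w+\omega w)$ of $V_0\cap\operatorname{span}(w,\omega w)$, as in Propositions~\ref{prop:planar} and~\ref{prop:eisup}. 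It lies at distance $\|w\|\sqrt{7/12}$ from $v/2$, giving $D(v)^2=4\cdot3\cdot\frac7{12}=7$. This is the floor the paper finds at its $27$ minimal horizontal pairs. A minor slip as well: for $i=\pm6$ the vertical contribution is $36t^2$, not $9t^2$.
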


\begin{proof}
Voronoi's theorem on $\Lambda/2\Lambda$ (with the complete bound
$\max_{s\in\{0,1\}^7}Q(s,s)$) gives $127$ pairs, i.e.\ $254$ facets, the
maximum in $\R^7$. The $30\,368$ vertices are obtained as exact solutions of
integer systems; completeness is proved by exact closure along the
$1$-skeleton without assuming simplicity ($1\,600$ vertices have nine active
facets), giving $R^2=96858928789031597/59047277850000000$. The safe window
$\|v\|^2\le2(D_*^2+\diam^2V_0)=200191665026531597/7380909731250000$ contains
$74$ vectors, each with an exact KKT certificate. The minimum $7$ is attained
at $27$ pairs $\pm v$ of horizontal vectors, the images of the $54$ minimal
vectors of $E_6^*$ (the floor $\sqrt{7/3}\,\lambda_1$ of
Proposition~\ref{prop:planar}); the nearest layer vector gives
$D^2=478385948533/68336903600\approx7.000404$. Margin for $\ell=103/100$:
$D(\Gamma)^2-\ell^2\diam^2V_0=5750986852163787427/147618194625000000>0$.
\end{proof}

The inequality $\diam^2V_0<7$ was also obtained by a second route sharing no
computation with the proof above: on the pieces of the common refinement of
two shifted Voronoi partitions of the six-dimensional base ($167$ pieces,
$103$ provably empty) the covering radius is bounded by the maximum of an
explicit convex quadratic, which equals
$103310189182571717/59047277850000000=1.7496\ldots<7/4$, all in fractions.

\subsection{Dimension 9: $\chi(\R^9)\le7203$}\label{ssec:7203}

The same lamination over $E_8/2401$, $\Gamma'=(3+\omega)E_8$ (width
$\sqrt{7/6}$), with $m=3$ and $t^2=8264707/6250000$: in a $\Z$-basis whose
first eight vectors span $E_8$ and whose ninth is the layer vector,
\begin{equation}\label{eq:gram7203}
Q=\begin{pmatrix}\tfrac32A&g\\ g^{\mathsf T}&17982/10^4\end{pmatrix},\qquad
A=\begin{pmatrix}
2&1&1&-1&-1&1&1&1\\ 1&2&1&0&-1&1&1&1\\ 1&1&2&0&0&0&0&0\\ -1&0&0&2&0&0&0&-1\\
-1&-1&0&0&2&-1&-1&-1\\ 1&1&0&0&-1&2&1&1\\ 1&1&0&0&-1&1&2&1\\ 1&1&0&-1&-1&1&1&2
\end{pmatrix},
\end{equation}
\[
10^4g^{\mathsf T}=(3329,\,6390,\,4411,\,-611,\,-3579,\,5606,\,4509,\,8550),
\]
\[
C=\begin{pmatrix}
-5&-2&-9&-5&-3&-6&-10&-3&0\\ 4&3&-4&-3&0&0&-3&0&0\\ 0&0&9&5&3&3&8&3&0\\
0&0&-5&-2&0&-3&-4&0&0\\ 0&0&-4&-3&0&-1&-3&-1&0\\ 0&0&4&3&1&3&3&1&0\\
0&0&0&0&0&0&2&-3&0\\ 0&0&0&0&0&0&1&2&0\\ 0&0&0&0&0&0&0&0&3
\end{pmatrix},
\]
where $A$ is a Gram matrix of $E_8$ ($\det A=1$), $\Gamma=C\Z^9$,
$|\det C|=7203=3\cdot7^4$, and the Smith form of $C$ is
$\mathrm{diag}(1,1,1,1,1,7,7,7,21)$. Unlike $\R^7$, the a priori bound
$\diam^2V_0\le6+t^2$ gives only $R^2\le1.8306>7/4$, so the claim rests on the
complete list of vertices of the nine-dimensional cell; it became computable
after an exact unimodular change of basis.

\begin{theorem}\label{thm:7203}
For $\Lambda=\Z^9$ with the form \eqref{eq:gram7203} and $\Gamma=C\Z^9$,
\[
D(\Gamma)^2=7,\qquad
\diam^2V_0=\frac{1119552292542693}{165294140000000},\qquad
d^2=\frac{1157058980000000}{1119552292542693},\quad d=1.0166127\ldots,
\]
so $\chi(\R^9,[1,\ell])\le7203$ for all $1\le\ell<d$; in particular,
$\chi(\R^9)\le7203$.
\end{theorem}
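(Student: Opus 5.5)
We run the certificate protocol of Proposition~\ref{prop:protocol} on $\Lambda=\Z^9$ with the form \eqref{eq:gram7203} and $\Gamma=C\Z^9$, mirroring the proof of Theorem~\ref{thm:1029}.

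Steps (i) and (ii) are routine. For (i), we compute $\det C$ over $\Z$ (it is $\pm7203$), and confirm the Smith form $\mathrm{diag}(1,1,1,1,1,7,7,7,21)$ by exact elimination. For (ii), we run over the $511$ nonzero classes of $\Lambda/2\Lambda$, enumerating vectors with $Q(v,v)\le\max_{s\in\{0,1\}^9}Q(s,s)$. In each class we keep the minimal ones, and the classes with a unique pair $\pm v$ give the facets of $V_0$. We expect the maximal $2(2^9-1)=1022$ facets, since the form is generic in its last row, but the argument does not depend on this.

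The main obstacle is step (iii), the vertex enumeration of the nine-dimensional cell. Here the a priori bound $6+t^2$ is insufficient, so we cannot shortcut it as in $\R^7$. Solving all $9$-subsets of $\sim10^3$ facets is infeasible, so we use exact closure along the $1$-skeleton, not assuming simplicity. We start from one vertex proposed by floating point and verified over $\Q$. At each vertex we take the active facets and, for degenerate vertices, the full list of edge directions, computed exactly as extreme rays of the tangent cone. Along each edge we step to the next facet by an exact ratio test. Closure of the resulting vertex set under this operation proves completeness, because the graph of a polytope is connected. Before doing this we apply an exact unimodular (LLL-type) change of basis to make the integer systems small; this is what makes the computation feasible. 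Then $\diam^2V_0=4R^2=4\max\langle x,x\rangle$ over the vertices, which should return the stated fraction.

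For (iv), we take the rational window $W=2(D_*^2+\diam^2V_0)$ with $D_*^2=7$, as in $\R^7$. This exceeds $(2(1+d)R)^2$, because $D(v)\ge\|v\|-\diam V_0$. We list $\Gamma\setminus\{0\}$ in the window by Fincke--Pohst on an LLL-reduced basis of $\Gamma$, with the bound itself checked in fractions. For (v), for each window vector we exhibit $x^*\in V_0$ and multipliers $\mu\ge0$ on its active facets with $v/2-x^*=\sum\mu_jv_j$, all over $\Q$. We expect the minimum $7$ to be attained on the horizontal vectors, the images of the $240$ minimal vectors of $E_8$ in $(3+\omega)E_8$, in accordance with the floor $\sqrt{7/3}\lambda_1$ of Proposition~\ref{prop:planar} applied to $\lambda_1^2=3$. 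We must also check that the layer vectors (layer coordinate a nonzero multiple of $3$) all give $D^2\ge7$. Step (vi) is then $7-\ell^2\diam^2V_0>0$ for every $\ell<d$, immediate from $d^2=7/\diam^2V_0>1$. Proposition~\ref{prop:crit} finishes the proof.
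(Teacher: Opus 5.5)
Your plan is correct and follows the paper's proof essentially step for step. The paper likewise uses a complete facet list (vectors with $\|v\|^2\le6+t^2$, cross-checked by an enumeration over $\Lambda/2\Lambda$), exact closure along the $1$-skeleton without assuming simplicity after a unimodular change of basis, and the safe window $\|v\|^2\le2(7+\diam^2V_0)$ with an exact KKT certificate for each window vector; it finds the minimum $7$ at the $240$ horizontal images of the minimal vectors of $E_8$, with no layer vector below it. Your guess of $1022$ facets is wrong but harmless, as you note: the paper finds $376$ pairs, i.e.\ $752$ facets, which is consistent with the $120$ root classes of $E_8/2E_8$ plus the $256$ odd-layer classes.
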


\begin{proof}
Among $4504$ vectors within the bound $\|v\|^2\le6+t^2$ one finds $376$
pairs, i.e.\ $752$ facets (of at most $1022$; an independent enumeration
over $\Lambda/2\Lambda$ without any bound gives the same list, and a missed
facet could only weaken the bound). Exact closure along the $1$-skeleton
yields $1\,654\,230$ vertices ($1\,649\,640$ simple, $4320$ with fifteen and
$270$ with sixteen active facets) and
$R^2=1119552292542693/661176560000000=1.69327281\ldots<7/4$. The safe window
$\|v\|^2\le2(D_*^2+\diam^2V_0)=2276611272542693/82647070000000$ contains
$280$ vectors ($140$ pairs), each with an exact KKT certificate; the minimum
$7$ is attained at the $240$ horizontal vectors ($120$ pairs), the images of
the minimal vectors of $E_8$, and no layer vector violates the threshold.
Margin at $\ell=1$: $D(\Gamma)^2-\diam^2V_0=37506687457307/165294140000000
=0.2269\ldots>0$; at $\ell=101/100$ it is still
$150036863771988707/1652941400000000000>0$.
\end{proof}

The bound $7203$ supersedes $\chi(\R^9)\le9604$ of Corollary~\ref{cor:9604}
in both parameters (fewer colors, wider segment: $1.0166\ldots$ against
$\sqrt{63/61}=1.0162\ldots$).

\section{The Eisenstein identity and the product rule}\label{sec:identity}

A lattice $\Lambda\subset\R^n$ is \emph{Eisenstein} if it carries a
multiplication by $\omega=e^{2\pi i/3}$ acting as an isometry without
nonzero fixed vectors; then $\alpha\Lambda$ has index $|\alpha|^n$ for
$\alpha\in\Z[\omega]$. The ABPR constructions with $7^{n/2}$ colors in
dimensions $4,6,8,24$ are $\Gamma=(3+\omega)\Lambda$ ($|3+\omega|^2=7$) on
$D_4$, $E_6^*$, $E_8$ and the Leech lattice $\Lambda_{24}$ \cite{ABPR}. Let
$H_1\subset\CC$ be the Voronoi cell of $\Z[\omega]$: the regular hexagon with
facets at distance $1/2$ and vertices at distance $1/\sqrt3$ from the origin.

\begin{proposition}[lower bound via projection onto a plane]\label{prop:planar}
Let $\Lambda$ be Eisenstein, $w\in\Lambda\setminus\{0\}$,
$P=\operatorname{span}(w,\omega w)$ and $H=\|w\|H_1$ the Voronoi cell of
$\Z[\omega]w\subset P$. Then $\pi_P(V_0)\subseteq H$ and
$\dist(\alpha w/2,V_0)\ge\|w\|\dist(\alpha/2,H_1)$ for every
$\alpha\in\Z[\omega]$. In particular
$D((3+\omega)\Lambda)\ge\sqrt{7/3}\,\lambda_1(\Lambda)$.
\end{proposition}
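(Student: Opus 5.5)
The argument has three steps: show that $\pi_P(V_0)$ lies in the hexagon $H$, transfer distances from $\R^n$ to the plane, and then evaluate the planar distance for $\alpha=3+\omega$ by hand.

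\emph{Step 1: the projection lies in $H$.} Identify $P$ with $\CC$ via $w\mapsto\|w\|$, $\omega w\mapsto\|w\|\omega$. Since multiplication by $\omega$ is an isometry of order $3$ without fixed vectors, it is a rotation by $2\pi/3$ on $P$. In particular $\langle w,\omega w\rangle=-\tfrac12\|w\|^2$, so the map is an isometry onto the scaled Eisenstein lattice.

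The hexagon $H=\|w\|H_1$ is cut out by the six inequalities $\langle y,\beta w\rangle\le\frac12\|\beta w\|^2$ for the units $\beta\in\{\pm1,\pm\omega,\pm\omega^2\}$. Each vector $\beta w$ lies in $\Lambda\cap P$. For $x\in V_0$ the Voronoi inequality gives $\langle x,\beta w\rangle\le\frac12\|\beta w\|^2$. Because $\beta w\in P$, we have $\langle\pi_P x,\beta w\rangle=\langle x,\beta w\rangle$. Hence $\pi_P x\in H$.

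\emph{Step 2: the distance bound.} For any $x\in V_0$, since $\alpha w/2\in P$ and $\pi_P$ is $1$-Lipschitz,
\[
\|x-\alpha w/2\|\ \ge\ \|\pi_Px-\alpha w/2\|\ \ge\ \dist(\alpha w/2,H).
\]
Under the isometric identification of Step 1, $\dist(\alpha w/2,H)=\|w\|\dist(\alpha/2,H_1)$. Taking the infimum over $x\in V_0$ gives the stated inequality.

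\emph{Step 3: the bound on $D((3+\omega)\Lambda)$.} Every nonzero $v\in(3+\omega)\Lambda$ has the form $v=(3+\omega)u$ with $u\in\Lambda\setminus\{0\}$. Apply Step 2 with $w=u$ and $\alpha=3+\omega$, together with Lemma~\ref{lem:mid}:
\[
D(v)=2\dist(v/2,V_0)\ \ge\ 2\|u\|\dist\bigl((3+\omega)/2,H_1\bigr).
\]
Since $\|u\|\ge\lambda_1(\Lambda)$, it suffices to prove
\[
\dist\bigl((3+\omega)/2,H_1\bigr)=\tfrac{1}{2}\sqrt{7/3},\qquad\text{equivalently}\qquad \dist^2=\tfrac{7}{12}.
\]

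This planar computation is the only real content. Set $p=(3+\omega)/2=\frac54+\frac{\sqrt3}{4}i$, so $|p|^2=\frac{25}{16}+\frac{3}{16}=\frac74$. The vertices of $H_1$ lie at radius $1/\sqrt3$ and at angles $\pm\pi/6+k\pi/3$. The argument of $p$ is $\arctan(\sqrt3/5)\approx19.1^\circ$, so the nearest candidate is the vertex $q=\frac{1}{\sqrt3}e^{i\pi/6}=\frac12+\frac{\sqrt3}{6}i$.

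Compute $p-q=\frac34+\frac{\sqrt3}{12}i$, giving
\[
|p-q|^2=\tfrac{9}{16}+\tfrac{3}{144}=\tfrac{81}{144}+\tfrac{3}{144}=\tfrac{84}{144}=\tfrac{7}{12}.
\]

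It remains to confirm that $q$ is actually the closest point of $H_1$ to $p$, not merely the closest vertex. By the KKT condition, this holds exactly when $p-q$ lies in the normal cone at $q$. That cone is spanned by the normals $1$ and $e^{i\pi/3}$ of the two facets meeting at $q$, so it consists of the directions with arguments in $[0,\pi/3]$. The argument of $p-q$ is $\arctan\bigl(\tfrac{\sqrt3/12}{3/4}\bigr)=\arctan(\sqrt3/9)\in(0,\pi/3)$, so $q$ is the projection.

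Therefore $\dist(p,H_1)^2=\tfrac{7}{12}$, so $D(v)\ge2\|u\|\sqrt{7/12}=\sqrt{7/3}\,\|u\|\ge\sqrt{7/3}\,\lambda_1(\Lambda)$ for every nonzero $v\in(3+\omega)\Lambda$. Taking the minimum over $v$ gives $D((3+\omega)\Lambda)\ge\sqrt{7/3}\,\lambda_1(\Lambda)$.
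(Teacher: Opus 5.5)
Your proposal is correct and follows essentially the same route as the paper. Both arguments cut out $H$ by the half-planes of the six unit multiples $\pm\omega^k w\in\Lambda$, use that $\pi_P$ is $1$-Lipschitz and fixes $\alpha w/2$, and then place $(5/4,\sqrt3/4)$ in the normal cone of the vertex $(1/2,1/(2\sqrt3))$ to get $7/12$ before applying Lemma~\ref{lem:mid}. You also spell out two steps the paper leaves implicit: the $\Z[\omega]$-equivariant isometry $P\cong\CC$, and the reduction $v=(3+\omega)u$ with $\|u\|\ge\lambda_1(\Lambda)$.
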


\begin{proof}
The six vectors $\pm w,\pm\omega w,\pm(1+\omega)w=\mp\omega^2w$ lie in
$\Lambda$ and have length $\|w\|$; the half-spaces
$\langle x,u\rangle\le\|u\|^2/2$ over them contain $V_0$ and have normals in
$P$, so $\pi_P(V_0)$ lies in the hexagon they cut out in $P$, which is $H$.
The point $\alpha w/2$ lies in $P$ and $\pi_P$ is $1$-Lipschitz. For
$\alpha=3+\omega$, $\|w\|=1$: the point $(5/4,\sqrt3/4)$ lies in the normal
cone of the vertex $(1/2,1/(2\sqrt3))$ of $H_1$, at squared distance
$9/16+1/48=7/12$; apply Lemma~\ref{lem:mid}.
\end{proof}

\begin{lemma}[a point in the cell]\label{lem:cell}
Let $u_1,\dots,u_s$ be shortest vectors of $\Lambda$, $q=\sum_ic_iu_i$ with
$c_i\ge0$, $\sum_ic_i\le1$, and $\langle q,u_j\rangle\le\frac12\lambda_1^2$
for all $j$. Then $q\in V_0$.
\end{lemma}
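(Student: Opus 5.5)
We have to show $q\in V_0$, that is, $\langle q,b\rangle\le\frac12\|b\|^2$ for every $b\in\Lambda$. By hypothesis the inequality already holds for the shortest vectors $b=u_j$. The plan is to split the remaining lattice vectors into two classes: those among $u_1,\dots,u_s$, and all others. For the others we use a crude norm bound.

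First, bound the length of $q$. By convexity of the norm and $\|u_i\|=\lambda_1$,
\[
\|q\|\le\sum_ic_i\|u_i\|\le\lambda_1 .
\]
For any $b\in\Lambda$, Cauchy--Schwarz then gives $\langle q,b\rangle\le\lambda_1\|b\|$. If $\|b\|\ge2\lambda_1$, this yields $\langle q,b\rangle\le\frac12\|b\|^2$ immediately. So only the vectors with $\lambda_1\le\|b\|<2\lambda_1$ remain.

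For these intermediate vectors I would not use Cauchy--Schwarz. Instead I would expand the inner product:
\[
\langle q,b\rangle=\sum_ic_i\langle u_i,b\rangle .
\]
The aim is to show $\langle u_i,b\rangle\le\frac12\|b\|^2$ for each shortest vector $u_i$ and each $b\in\Lambda$; the bound for $q$ then follows because $\sum c_i\le1$ and all $c_i\ge0$. This reduces to a lattice fact. Since $u_i-b\in\Lambda$, either $u_i=b$ or $\|u_i-b\|^2\ge\lambda_1^2=\|u_i\|^2$. Expanding the latter gives
\[
-2\langle u_i,b\rangle+\|b\|^2\ge0,
\]
which is exactly $\langle u_i,b\rangle\le\frac12\|b\|^2$. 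If $u_i=b$, then $\langle u_i,b\rangle=\|b\|^2$, which is larger than $\frac12\|b\|^2$; this case is covered separately because $b$ is then one of the $u_j$, where the hypothesis applies directly.

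This argument actually makes the first (norm) step unnecessary. The cleanest route is to handle all $b\in\Lambda\setminus\{0\}$ at once.

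If $b\notin\{u_1,\dots,u_s\}$, then every term satisfies $\langle u_i,b\rangle\le\frac12\|b\|^2$, and so
\[
\langle q,b\rangle\le\Bigl(\sum_ic_i\Bigr)\tfrac12\|b\|^2\le\tfrac12\|b\|^2 .
\]
Here the positivity of $\frac12\|b\|^2$ is what lets $\sum c_i\le1$ finish the bound. If instead $b=u_j$ for some $j$, the hypothesis $\langle q,u_j\rangle\le\frac12\lambda_1^2=\frac12\|b\|^2$ gives the inequality directly.

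Hence $q$ satisfies every Voronoi inequality, and $q\in V_0$. The only point needing care is the case $b=u_i$ in the termwise bound, since that single term exceeds $\frac12\|b\|^2$. This is exactly why the extra hypothesis on $\langle q,u_j\rangle$ is imposed, and why the proof must separate $b$ in the list from $b$ not in the list.
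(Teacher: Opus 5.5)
Your proof is correct and is essentially the paper's argument: for $b\notin\{u_1,\dots,u_s\}$ you derive $\langle u_i,b\rangle\le\frac12\|b\|^2$ from $\|b-u_i\|\ge\lambda_1$ and sum with weights $c_i\ge0$, $\sum_ic_i\le1$, and for $b=u_j$ you use the hypothesis directly. The opening Cauchy--Schwarz step is, as you note yourself, unnecessary and can be dropped.
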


\begin{proof}
$q\in V_0$ iff $\langle q,r\rangle\le\frac12\|r\|^2$ for all $r\in\Lambda$.
For $r\ne u_i$, $\|r-u_i\|^2\ge\lambda_1^2=\|u_i\|^2$ gives
$\langle r,u_i\rangle\le\frac12\|r\|^2$; hence
$\langle q,r\rangle\le(\sum_ic_i)\frac12\|r\|^2\le\frac12\|r\|^2$ for
$r\notin\{u_i\}$, and for $r=u_j$ it is the hypothesis.
\end{proof}

\begin{proposition}[upper bound]\label{prop:eisup}
Let $\Lambda$ be Eisenstein and $w$ a shortest vector. Then the six vertices
$\pm\omega^kq$, $q=\frac13(2w+\omega w)$, of $H=\|w\|H_1$ lie in $V_0$; hence
$V_0\cap P=H$ and $\dist(\alpha w/2,V_0)=\|w\|\dist(\alpha/2,H_1)$ for all
$\alpha\in\Z[\omega]$.
\end{proposition}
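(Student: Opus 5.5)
Apply Lemma~\ref{lem:cell} to $q=\frac13(2w+\omega w)$ with the shortest vectors $u_1=w$ and $u_2=-\omega^2 w=(1+\omega)w$. Then $q=\frac13u_1+\frac13u_2$, so $c_1=c_2=\frac13$, both coefficients are nonnegative, and $c_1+c_2=\frac23\le1$. Both $u_i$ lie in $\Lambda$, and they are shortest because $\omega$ is an isometry.

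It remains to check $\langle q,u\rangle\le\frac12\lambda_1^2$ for every shortest vector $u$ of $\Lambda$; this is the main step. Normalize $\|w\|=1=\lambda_1$. Compute $\|q\|^2=\frac19\|2w+\omega w\|^2=\frac19(4+1+4\langle w,\omega w\rangle)=\frac19(5-2)=\frac13$, using $\langle w,\omega w\rangle=-\frac12$. This holds because $\omega$ is an isometry with $1+\omega+\omega^2=0$, so $\|w+\omega w\|=\|\omega^2w\|=1$. Cauchy--Schwarz alone gives only $\langle q,u\rangle\le 1/\sqrt3\approx0.577$, which exceeds $\frac12$, so a finer argument is needed.

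Decompose $u=u_P+u_\perp$ with $u_P=\pi_P(u)$. By Proposition~\ref{prop:planar}, $u$ lies in $\Lambda$, and $\langle u,v\rangle\le\frac12$ for the six hexagon vectors $v=\pm\omega^k w$, since $\|u-v\|\ge1$ whenever $u\ne v$ and both have length $1$. Hence $u_P$ lies in the hexagon $\{x\in P:\langle x,v\rangle\le\frac12\}=H_1$, rescaled by $\|w\|=1$. Since $q\in P$, we have $\langle q,u\rangle=\langle q,u_P\rangle$, and its maximum over the hexagon $H_1$ is attained at a vertex. The vertices of $H_1$ are $\pm\omega^k q$ with $\|q\|^2=\frac13$, and $\langle q,\omega q\rangle=-\frac16$. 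So the values of $\langle q,\cdot\rangle$ at the vertices are $\frac13$ (at $q$ itself), $\frac16$ at the two neighbours (because $\langle q,\omega^{\pm1}(-q)\rangle=\frac16$), and smaller elsewhere.

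The vertex $q$ is therefore the only place where $\langle q,u_P\rangle>\frac16$ can occur, and there $\langle q,u\rangle\le\frac13<\frac12$. Thus the hypothesis of Lemma~\ref{lem:cell} holds with room to spare, and $q\in V_0$. Applying the symmetries $\omega$ and $-1$ of $\Lambda$, which preserve $V_0$, gives $\pm\omega^k q\in V_0$.

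By convexity $H\subseteq V_0\cap P$. Conversely, $V_0\cap P\subseteq\pi_P(V_0)\subseteq H$ by Proposition~\ref{prop:planar}, so $V_0\cap P=H$. For the distance formula, the upper bound $\dist(\alpha w/2,V_0)\le\dist(\alpha w/2,H)=\|w\|\dist(\alpha/2,H_1)$ follows from $H\subseteq V_0$. The lower bound is exactly Proposition~\ref{prop:planar}, so equality holds. The obstacle I anticipated, that Cauchy--Schwarz is insufficient, is overcome by projecting onto $P$ and using the hexagon constraint.
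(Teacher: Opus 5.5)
Your overall structure matches the paper's: Lemma~\ref{lem:cell} for $q$, symmetry for the other five vertices, then the squeeze $H\subseteq V_0\cap P\subseteq\pi_P(V_0)\subseteq H$ and the two-sided distance bound. Your decomposition $q=\frac13w+\frac13(1+\omega)w$ is a legitimate alternative to the paper's $q=\frac23w+\frac13\omega w$. But the step you call the main one is wrong. The claim that $u_P=\pi_P(u)\in H_1$ for every shortest $u$ rests on $\langle u,v\rangle\le\frac12$ for all six $v=\pm\omega^kw$. As you note yourself, that inequality holds only when $u\ne v$. For $u\in\{\pm\omega^kw\}$ one has $u_P=u$ of length $1>1/\sqrt3$, which lies outside $H_1$. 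In particular, for $u=u_1=w$ you get $\langle q,w\rangle=\frac13(2-\frac12)=\frac12$, not $\le\frac13$, and likewise $\langle q,(1+\omega)w\rangle=\frac12$. So the conclusion ``$\langle q,u\rangle\le\frac13<\frac12$, with room to spare'' is false, and it fails exactly at $u_1$ and $u_2$. This is unavoidable: $q$ is a vertex of $H$, so it lies on the bisecting hyperplanes of $w$ and $-\omega^2w$.

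You have also misread the hypothesis of Lemma~\ref{lem:cell}. The condition $\langle q,u_j\rangle\le\frac12\lambda_1^2$ is required only for the vectors $u_j$ that appear in the combination. For every other lattice vector $r$, the lemma's proof gives $\langle q,r\rangle\le(\sum_ic_i)\frac12\|r\|^2$ automatically. Your projection argument therefore treats only vectors that need no check, and gets wrong the two vectors that do. The repair is the direct computation the paper makes. Using $\langle w,\omega w\rangle=-\frac12\lambda_1^2$, which you already derived, one gets $\langle q,w\rangle=\frac12\lambda_1^2$ and $\langle q,\omega w\rangle=\frac13(-1+1)\lambda_1^2=0$, hence also $\langle q,(1+\omega)w\rangle=\frac12\lambda_1^2$. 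The hypothesis holds with equality, and the rest of your argument then goes through unchanged.
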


\begin{proof}
From $w+\omega w=-\omega^2w$ we get $\|w+\omega w\|=\|w\|$, i.e.\
$\langle w,\omega w\rangle=-\frac12\lambda_1^2$. Then
$\langle q,w\rangle=\frac13(2-\frac12)\lambda_1^2=\frac12\lambda_1^2$ and
$\langle q,\omega w\rangle=\frac13(-1+1)\lambda_1^2=0$, so $q\in V_0$ by
Lemma~\ref{lem:cell} ($q=\frac23w+\frac13\omega w$); the other vertices
follow by replacing $w$ with $\pm\omega^kw$. By convexity
$H\subseteq V_0\cap P\subseteq\pi_P(V_0)\subseteq H$
(Proposition~\ref{prop:planar}), and the distance from $\alpha w/2\in P$ to
$V_0$ is squeezed between its distances to $H$ from both sides.
\end{proof}

\begin{theorem}[Eisenstein identity]\label{thm:eis}
For an Eisenstein lattice $\Lambda$ and every
$\alpha\in\Z[\omega]\setminus\{0\}$,
\begin{equation}\label{eq:alpha}
D(\alpha\Lambda)=2\,\lambda_1(\Lambda)\cdot\dist\bigl(\alpha/2,\,H_1\bigr),
\end{equation}
and for $|\alpha|>1$ the minimum is attained exactly at $\alpha w$ with $w$
shortest. In particular
\begin{equation}\label{eq:eis}
D\bigl((3+\omega)\Lambda\bigr)^2=\tfrac73\,\lambda_1(\Lambda)^2,\qquad
d\bigl(\Lambda,(3+\omega)\Lambda\bigr)=\frac{\sqrt{7/3}}{\rho(\Lambda)},
\qquad\rho(\Lambda)=\frac{2R}{\lambda_1(\Lambda)},
\end{equation}
and $D(m\Lambda)=(m-1)\lambda_1(\Lambda)$ for every integer $m\ge2$.
\end{theorem}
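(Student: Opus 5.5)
The identity follows by combining the two-sided estimates of Propositions~\ref{prop:planar} and~\ref{prop:eisup} with the midpoint Lemma~\ref{lem:mid}. The nonzero vectors of $\alpha\Lambda$ are exactly the vectors $\alpha v$ with $v\in\Lambda\setminus\{0\}$, so by Lemma~\ref{lem:mid} we must determine $\min_{v\ne0}2\dist(\alpha v/2,V_0)$.

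\emph{Lower bound.} For an arbitrary nonzero $v\in\Lambda$, apply Proposition~\ref{prop:planar} with $w:=v$. Its proof never uses minimality of $w$: it only needs that the six vectors $\pm v,\pm\omega v,\pm\omega^2v$ lie in $\Lambda$ and have length $\|v\|$. This gives
\[
\dist(\alpha v/2,V_0)\ \ge\ \|v\|\,\dist(\alpha/2,H_1)\ \ge\ \lambda_1(\Lambda)\,\dist(\alpha/2,H_1).
\]
Hence $D(\alpha\Lambda)\ge2\lambda_1\dist(\alpha/2,H_1)$.

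\emph{Upper bound.} Take $v=w$ shortest. Proposition~\ref{prop:eisup} gives the equality $\dist(\alpha w/2,V_0)=\lambda_1\dist(\alpha/2,H_1)$, so the minimum is attained and \eqref{eq:alpha} holds.

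\emph{Exactness of the minimizers for $|\alpha|>1$.} It suffices to show $\dist(\alpha/2,H_1)>0$. Then the second inequality in the lower bound is strict whenever $\|v\|>\lambda_1$, so every minimizer $\alpha v$ has $v$ shortest. The nonzero norms of $\Z[\omega]$ are $1,3,4,7,\dots$, so $|\alpha|>1$ forces $|\alpha|\ge\sqrt3$. Then $|\alpha/2|\ge\sqrt3/2>1/\sqrt3$, which is the circumradius of $H_1$, so $\alpha/2\notin H_1$. I expect this to be the only point needing care, namely checking that positivity of the planar distance is what drives uniqueness. Beyond that the argument is a direct combination of the two propositions.

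\emph{Specializations.} For $\alpha=3+\omega$, the proof of Proposition~\ref{prop:planar} already computed $\dist(\alpha/2,H_1)^2=7/12$. Hence $D^2=4\lambda_1^2\cdot7/12=\tfrac73\lambda_1^2$. Dividing by $\diam V_0=2R$ gives the stated formula for $d$ and $\rho$.

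For an integer $m\ge2$, the point $m/2$ lies on the real axis. This is the outward normal direction of the facet of $H_1$ at distance $1/2$, and $m/2$ lies in the normal cone of that facet's midpoint. Therefore $\dist(m/2,H_1)=(m-1)/2$, which gives $D(m\Lambda)=(m-1)\lambda_1$.
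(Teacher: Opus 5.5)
Your proposal is correct and follows the paper's proof essentially step for step. The lower bound comes from Proposition~\ref{prop:planar}, which is already stated for arbitrary nonzero $w$, so you do not need to re-inspect its proof. Equality at shortest $w$ comes from Proposition~\ref{prop:eisup}, and strictness for non-shortest $w$ follows from $|\alpha/2|\ge\sqrt3/2>1/\sqrt3$. The two specializations are handled, as in the paper, by the vertex computation ($7/12$) and the facet-midpoint computation ($(m-1)/2$) in $H_1$.
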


\begin{proof}
For $v=\alpha w$, Lemma~\ref{lem:mid} and Proposition~\ref{prop:planar} give
$D(v)\ge2\|w\|\dist(\alpha/2,H_1)\ge2\lambda_1\dist(\alpha/2,H_1)$, with
equality for shortest $w$ by Proposition~\ref{prop:eisup}. If $|\alpha|>1$
then $|\alpha/2|\ge\sqrt3/2>1/\sqrt3$, so $\alpha/2\notin H_1$ and the first
inequality is strict for non-shortest $w$. For $\alpha=3+\omega$ the squared
distance is $7/12$; for $\alpha=m$ the closest point of $H_1$ to $m/2$ is
the facet midpoint $1/2$.
\end{proof}

\begin{corollary}[exact widths]\label{cor:widths}
$D_4$, $E_6^*$, $E_8$ and $\Lambda_{24}$ have $\rho=\sqrt2$
\cite{ConwaySloane}, so each ABPR coloring with $7^{n/2}$ colors has width
exactly $\sqrt{7/6}$: $\chi(\R^n,[1,\ell])\le7^{n/2}$ for $\ell<\sqrt{7/6}$,
$n=4,6,8,24$, and $\sqrt{7/6}$ cannot be improved within these
constructions. The Coxeter--Todd lattice $K_{12}$ has $\rho=\sqrt{8/3}$, so
$d(K_{12},(3+\omega)K_{12})=\sqrt{7/8}<1$ --- no $7^6$-coloring of $\R^{12}$
this way (a partial negative answer to Open question~2 of \cite{ABPR}) ---
while $d(K_{12},3K_{12})=2/\rho=\sqrt{3/2}$, so $\chi(\R^{12},[1,\ell])\le3^{12}$
for $\ell<\sqrt{3/2}$.
\end{corollary}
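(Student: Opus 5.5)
The corollary follows by substituting known covering radii into the width formula \eqref{eq:eis} of Theorem~\ref{thm:eis}; the only outside input is the values $\rho=2R/\lambda_1$ of the named lattices, taken from \cite{ConwaySloane}.

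\textbf{Step 1 (the lattices are Eisenstein).} For $D_4$, $E_6^*$, $E_8$, $\Lambda_{24}$ and $K_{12}$ we need an isometry of order $3$ without nonzero fixed vectors. For $E_6^*$, $E_8$, $\Lambda_{24}$ and $K_{12}$ this is the standard $\Z[\omega]$-module structure:
\begin{itemize}
\item $E_6^*$ is dual to $E_6$, which is the rank-$3$ Eisenstein lattice $\Z[\omega]^3$ glued with the $3$-adic code;
\item $E_8$ is the rank-$4$ Eisenstein structure coming from the hexacode-type construction;
\item $\Lambda_{24}$ is the complex Leech lattice;
\item $K_{12}$ is defined as a $\Z[\omega]$-lattice of rank $6$.
\end{itemize}
For $D_4$ we use the triality-related element of order $3$ in the Weyl group of $F_4$ acting fixed-point-freely; equivalently, $D_4$ is the Hurwitz order and left multiplication by a unit of order $3$ is such an isometry. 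In each case it is also the structure ABPR used for $(3+\omega)\Lambda$.

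\textbf{Step 2 (widths $\sqrt{7/6}$).} By \eqref{eq:eis}, $d=\sqrt{7/3}/\rho$. Normalize $\lambda_1^2=2$ for $D_4$, $E_8$ and $\Lambda_{24}$ (covering radius $1$, $1$ and $\sqrt2$ at $\lambda_1^2=2,2,4$), and $\lambda_1^2=4/3$ with $R^2=2/3$ for $E_6^*$. In every case $\rho=\sqrt2$, so
\[
d=\sqrt{7/3}/\sqrt2=\sqrt{7/6}>1 .
\]
Proposition~\ref{prop:crit} then gives $\chi(\R^n,[1,\ell])\le7^{n/2}$ for $\ell<\sqrt{7/6}$. \emph{Sharpness:} Theorem~\ref{thm:eis} computes $D((3+\omega)\Lambda)$ as an exact equality, not a bound. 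Hence $d(\Lambda,(3+\omega)\Lambda)$ equals $\sqrt{7/6}$, and no larger $\ell$ is certified by these colorings (for $\ell\ge d$ no scaling as in Proposition~\ref{prop:crit} exists).

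\textbf{Step 3 ($K_{12}$).} Use $\lambda_1^2=4$ and $R^2=8/3$ \cite{ConwaySloane}, so $\rho^2=4R^2/\lambda_1^2=8/3$. Then \eqref{eq:eis} gives
\[
d^2=\frac{7/3}{8/3}=\frac78<1 ,
\]
so Proposition~\ref{prop:crit} does not apply. To say this construction fails outright, not just that the criterion fails: same-colored cells lie at distance $D<\diam V_0$, so after any scaling some forbidden distance in $[1,\ell]$ is realized between them, by continuity of distances between convex cells.

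For $\alpha=3$, the last claim of Theorem~\ref{thm:eis} gives $D(3K_{12})=2\lambda_1$. Hence
\[
d=\frac{2\lambda_1}{2R}=\frac{2}{\rho}=\frac{2}{\sqrt{8/3}}=\sqrt{3/2},
\]
and $[K_{12}:3K_{12}]=3^{12}$. Proposition~\ref{prop:crit} then gives the bound for $\ell<\sqrt{3/2}$.

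\textbf{Main obstacle.} The difficulty is the literature input rather than any argument: confirming the covering radii and that each lattice carries a fixed-point-free order-$3$ isometry. The covering radius of $E_6^*$ needs particular care, since the $\rho$ value hinges on the correct normalization.
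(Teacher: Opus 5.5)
Your proposal is correct and follows the paper's own route: the corollary is just \eqref{eq:eis}, together with $D(m\Lambda)=(m-1)\lambda_1$ at $m=3$, evaluated at the Conway--Sloane values $\rho=\sqrt2$ and $\rho(K_{12})=\sqrt{8/3}$, followed by Proposition~\ref{prop:crit}. Two side remarks are inaccurate but do not affect the result: the Eisenstein form of $E_8$ comes from the tetracode (the hexacode gives $K_{12}$), and your ``fails outright'' argument for $(3+\omega)K_{12}$ only handles scales at which the nearest pair of same-colored cells already realizes a distance $\ge1$, so small scales would need the farther cells $w+V_0$, $w\in\Gamma$ (e.g.\ via $D(w)\le\|w\|-\lambda_1$ from Lemma~\ref{lem:inradius}, the maximal distance between $V_0$ and $w+V_0$ being at least $\|w\|+\lambda_1$, and the spacing of the norms of $\Gamma$) --- a stronger claim than the corollary makes, since the paper reads ``no $7^6$-coloring this way'' simply as $d<1$.
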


The threshold $\sqrt{7/3}$ appears in \cite[Remark~9]{ABPR} as a sufficient
condition $\rho\le\sqrt{7/3}$ whose users were unknown; \eqref{eq:eis} shows
it is also necessary and gives the exact width of every such lattice, and it
turns the horizontal floors $D_{\min}=\sqrt7$ in Theorems~\ref{thm:1029} and
\ref{thm:7203} into theorems.

\begin{proposition}[product rule]\label{prop:product}
Let $\Lambda=\bigoplus_{i=1}^{s}\Lambda_i$ be an orthogonal direct sum,
$\Gamma=\bigoplus_i\Gamma_i$, $\Gamma_i\subseteq\Lambda_i$,
$d_i=d(\Lambda_i,\Gamma_i)$, $k_i=[\Lambda_i:\Gamma_i]$. The maximum of
$d(\Lambda,\Gamma)$ over independent rescalings of the blocks equals
$\bigl(\sum_id_i^{-2}\bigr)^{-1/2}$, attained when
$\diam^2V_0(\Lambda_i)\propto d_i^{-2}$; hence
$\chi(\R^n,[1,\ell])\le\prod_ik_i$ for all $\ell<(\sum_id_i^{-2})^{-1/2}$,
$n=\sum_in_i$.
\end{proposition}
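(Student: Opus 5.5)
The plan is to compute both ingredients of $d(\Lambda,\Gamma)$, namely $\diam V_0$ and $D(\Gamma)$, blockwise; the optimization over rescalings then becomes an elementary inequality.

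\emph{Step 1 (the cell is a product).} For an orthogonal direct sum $\Lambda=\bigoplus_i\Lambda_i$, I will show that $V_0(\Lambda)=\prod_iV_0(\Lambda_i)$. Write $x=(x_i)$. The nearest-lattice-point condition $\|x\|\le\|x-b\|$ for all $b\in\Lambda$ splits as $\sum_i\|x_i\|^2\le\sum_i\|x_i-b_i\|^2$ with the $b_i$ independent. This holds for all $b$ if and only if $\|x_i\|\le\|x_i-b_i\|$ for every $i$ and every $b_i\in\Lambda_i$: one direction is immediate, and for the other take $b$ with a single nonzero component. Consequently $\diam^2V_0=\sum_i\diam^2V_0(\Lambda_i)$, since the squared distance between two points of a product is the sum of the squared blockwise distances, and the maximum can be taken in each factor separately.

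\emph{Step 2 ($D(\Gamma)$ is the minimum of the blockwise values).} For $v=(v_i)\in\Gamma$, the distance between the products $V_0$ and $v+V_0$ satisfies $D(v)^2=\sum_iD_i(v_i)^2$, where $D_i(v_i)=\dist(V_0(\Lambda_i),v_i+V_0(\Lambda_i))$ and $D_i(0)=0$. The minimization separates over the factors. If $v\neq0$, some $v_i\ne0$, so $D(v)\ge D_i(v_i)\ge D(\Gamma_i)$. Conversely, a vector with a single nonzero component $v_i\in\Gamma_i$ realizes $D(v)=D_i(v_i)$. Hence $D(\Gamma)=\min_iD(\Gamma_i)$ and
\[
d(\Lambda,\Gamma)=\frac{\min_iD(\Gamma_i)}{\bigl(\sum_i\diam^2V_0(\Lambda_i)\bigr)^{1/2}}.
\]
Also $[\Lambda:\Gamma]=\prod_ik_i$, because $\Lambda/\Gamma\cong\prod_i\Lambda_i/\Gamma_i$.

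\emph{Step 3 (optimizing the scales).} Rescaling block $i$ by $s_i>0$ multiplies both $D(\Gamma_i)$ and $\diam V_0(\Lambda_i)$ by $s_i$, so each $d_i$ is unchanged. Put $\delta_i=s_i\diam V_0(\Lambda_i)$; then $D(s_i\Gamma_i)=d_i\delta_i$ and $d=\min_id_i\delta_i/(\sum_i\delta_i^2)^{1/2}$. Let $m=\min_id_i\delta_i$. Then $\delta_i\ge m/d_i$ for every $i$, so $\sum_i\delta_i^2\ge m^2\sum_id_i^{-2}$ and therefore $d\le(\sum_id_i^{-2})^{-1/2}$. Equality holds exactly when $\delta_i=m/d_i$ for all $i$, that is, when $\diam^2V_0(s_i\Lambda_i)\propto d_i^{-2}$, and this choice is achievable by taking $s_i=m/(d_i\diam V_0(\Lambda_i))$.

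\emph{Step 4 (the coloring bound).} With these scales, Proposition~\ref{prop:crit} gives $\chi(\R^n,[1,\ell])\le\prod_ik_i$ for all $\ell<(\sum_id_i^{-2})^{-1/2}$. When this value is at most $1$ the claim is vacuous, since $\ell\ge1$ by definition of the segment.

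The only point needing care is Step 2: one must justify that the distance between two products of convex sets equals the root-sum-of-squares of the factor distances. This holds because the squared Euclidean norm is additive over orthogonal blocks and the points in each factor can be chosen independently. Once Steps 1 and 2 are in place, everything else is bookkeeping.
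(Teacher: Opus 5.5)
Your proof is correct and follows the paper's own argument: the cell of the orthogonal sum is the product of the cells, $D(v)^2=\sum_iD_i(v_i)^2$ with the minimum attained at single-block vectors, and the same optimization inequality (the paper's $t_i=d_i^2\diam^2V_0(\Lambda_i)$ is your $(d_i\delta_i)^2$, and its bound $\sum_j t_jd_j^{-2}\ge\min_it_i\sum_jd_j^{-2}$ is your $\delta_i\ge m/d_i$). The only cosmetic difference is that you get additivity of $D^2$ by separating the distance between product sets, whereas the paper obtains it from the midpoint lemma.
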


\begin{proof}
The cell of a direct sum is the product of the cells, so
$\diam^2V_0=\sum_is_i$ with $s_i=\diam^2V_0(\Lambda_i)$, and
$D(v)^2=\sum_iD_i(v_i)^2$ by Lemma~\ref{lem:mid}. Since $D_i(0)=0$, the
minimum over $\Gamma\setminus\{0\}$ is attained at a vector with one nonzero
component, so $d^2=\min_i(d_i^2s_i)/\sum_js_j$. With $t_i=d_i^2s_i$,
$\sum_js_j=\sum_jt_jd_j^{-2}\ge\min_it_i\sum_jd_j^{-2}$, with equality iff
all $t_i$ coincide, i.e.\ $s_i\propto d_i^{-2}$. Apply
Proposition~\ref{prop:crit}.
\end{proof}

Width is thus an expendable resource: a block of width $d_i$ costs
$1/d_i^2$ out of a budget of $1$, and blocks may have different indices.

\begin{theorem}\label{thm:45619}
$\chi(\R^{10},[1,\ell])\le2401\cdot19=45619$ and
$\chi(\R^{26},[1,\ell])\le19\cdot7^{12}$ for all $\ell<\sqrt{217/214}$; in
particular $\chi(\R^{10})\le45619$ and $\chi(\R^{26})\le19\cdot7^{12}$.
\end{theorem}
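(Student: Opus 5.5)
The plan is to apply the product rule (Proposition~\ref{prop:product}) to two orthogonal blocks: an Eisenstein block of width $\sqrt{7/6}$, together with a planar hexagonal block with $19$ colors whose exact width we compute from the Eisenstein identity (Theorem~\ref{thm:eis}) in dimension $2$. For $\R^{10}$ the first block is $E_8/(3+\omega)E_8$, and for $\R^{26}$ it is $\Lambda_{24}/(3+\omega)\Lambda_{24}$. By Corollary~\ref{cor:widths} each of these has index $7^{n/2}$, namely $2401$ and $7^{12}$, and width exactly $d_1=\sqrt{7/6}$. It therefore costs $1/d_1^2=6/7$ of the budget.

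For the planar block take $\Lambda_2=\Z[\omega]\subset\CC=\R^2$, which is Eisenstein with $\lambda_1=1$ and Voronoi cell $H_1$. Its diameter is $2/\sqrt3$. Take $\Gamma_2=\alpha\Z[\omega]$ with $\alpha=5+2\omega$. Then $N(\alpha)=25-10+4=19$, so the index is $|\alpha|^2=19$. By \eqref{eq:alpha}, $D(\Gamma_2)=2\dist(\alpha/2,H_1)$, hence $d_2=\sqrt3\,\dist(\alpha/2,H_1)$.

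To compute the distance, write $\alpha=(4,\sqrt3)$, so $\alpha/2=(2,\sqrt3/2)$. We check that $\alpha/2$ lies in the normal cone of the vertex $p=(1/2,1/(2\sqrt3))$ of $H_1$. The difference is $\alpha/2-p=(3/2,1/\sqrt3)$, at angle $\arctan(2/(3\sqrt3))\approx21^\circ$. The normal cone at $p$ consists of the directions between the two adjacent facet normals, at $0^\circ$ and $60^\circ$, so this angle lies inside it. Hence
\[
\dist(\alpha/2,H_1)^2=\tfrac94+\tfrac13=\tfrac{31}{12},\qquad d_2^2=\tfrac{31}{4},
\]
so the planar block costs $4/31$.

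Now apply Proposition~\ref{prop:product} with the blocks rescaled so that $\diam^2V_0(\Lambda_i)\propto d_i^{-2}$. This gives
\[
d^{-2}=\tfrac67+\tfrac4{31}=\tfrac{186+28}{217}=\tfrac{214}{217},
\]
so $\chi(\R^{n},[1,\ell])\le7^{(n-2)/2}\cdot19$ for $\ell<\sqrt{217/214}$, with $n=10,26$. Since $217/214>1$, the case $\ell=1$ gives the bounds on $\chi(\R^n)$. The only real point to check is the normal-cone claim for the vertex, i.e.\ that the nearest point of $H_1$ to $\alpha/2$ is that vertex rather than a facet. Everything else is direct substitution into earlier results.
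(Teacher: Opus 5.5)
Your proposal is correct and follows the paper's proof essentially step for step. It uses the same two orthogonal blocks ($E_8/2401$ or $\Lambda_{24}/7^{12}$ of width $\sqrt{7/6}$, together with $\Z[\omega]/(5+2\omega)\Z[\omega]$), the same normal-cone check at the vertex $(1/2,1/(2\sqrt3))$ giving $d_2^2=31/4$, and the same budget $6/7+4/31=214/217<1$ fed into the product rule; your explicit angle check ($\approx 21^\circ\in[0^\circ,60^\circ]$) simply spells out the normal-cone claim that the paper states without detail.
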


\begin{proof}
The block $E_8/2401$ has $d_1^2=7/6$ (Corollary~\ref{cor:widths}), cost
$6/7$. The planar block $\Lambda_2=\Z[\omega]$ ($V_0=H_1$, $\diam^2V_0=4/3$),
$\Gamma_2=(5+2\omega)\Z[\omega]$ of index $N(5+2\omega)=19$: by
\eqref{eq:alpha}, $D(\Gamma_2)=2\dist((5+2\omega)/2,H_1)$, and the point
$(2,\sqrt3/2)$ lies in the normal cone of the vertex $(1/2,1/(2\sqrt3))$ at
squared distance $9/4+1/3=31/12$, so $D(\Gamma_2)^2=31/3$, $d_2^2=31/4$, cost
$4/31$. Total $6/7+4/31=214/217<1$. The Leech lattice block
$\Lambda_{24}/7^{12}$ has the same width $\sqrt{7/6}$.
\end{proof}

\begin{corollary}\label{cor:9604}
$\chi(\R^9,[1,\ell])\le2401\cdot4=9604$ and
$\chi(\R^{25},[1,\ell])\le4\cdot7^{12}$ for all $\ell<\sqrt{63/61}$.
\end{corollary}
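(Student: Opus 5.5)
The plan is to apply the product rule (Proposition~\ref{prop:product}) to two orthogonal blocks, exactly as in the proof of Theorem~\ref{thm:45619}, replacing the $19$-color planar block by a $4$-color one. For $\R^9$ the first block is $E_8$ with $\Gamma_1=(3+\omega)E_8$: index $2401$ and width $d_1^2=7/6$ by Corollary~\ref{cor:widths}, so its cost is $6/7$. For $\R^{25}$ we use the Leech block $\Lambda_{24}/(3+\omega)\Lambda_{24}$, which has the same width $\sqrt{7/6}$ and index $7^{12}$, again at cost $6/7$. What remains is a one-dimensional block of index $4$.

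For the second block take $\Lambda_2=\Z\subset\R$ with $\Gamma_2=4\Z$, so $k_2=4$. The cell is $V_0=[-\tfrac12,\tfrac12]$ with $\diam V_0=1$. The nearest same-colored cell is centered at $4$, and $D(4)=2\dist(2,V_0)=3$ by Lemma~\ref{lem:mid}. Hence $d_2=3$, $d_2^2=9$, and the cost is $1/9$. This is the one-dimensional analogue of $D(m\Lambda)=(m-1)\lambda_1$ in Theorem~\ref{thm:eis}, with $m=4$, $\lambda_1=1$, $\diam V_0=1$, and it can be checked directly.

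The budget is then
\[
\frac{6}{7}+\frac19=\frac{54+7}{63}=\frac{61}{63}<1 .
\]
Proposition~\ref{prop:product} gives $d(\Lambda,\Gamma)=\sqrt{63/61}$ for a suitable rescaling of the blocks, with $\Lambda=E_8\oplus\Z$ in $\R^9$ and $\Lambda_{24}\oplus\Z$ in $\R^{25}$, and with indices $2401\cdot4=9604$ and $7^{12}\cdot4$. Since $\sqrt{63/61}>1$, Proposition~\ref{prop:crit} yields $\chi(\R^n,[1,\ell])\le\prod k_i$ for all $\ell<\sqrt{63/61}$.

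The only step needing care is that the exact widths are available. For the first block this is Corollary~\ref{cor:widths}, which follows from the Eisenstein identity; for the second block it is the elementary computation above. There is no real obstacle: the product rule assumes only an orthogonal sum of lattices with sublattices, and each block may be rescaled independently, which is exactly the freedom the proposition optimizes over.
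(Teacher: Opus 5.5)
Your proposal is correct and follows the paper's proof: you apply the product rule (Proposition~\ref{prop:product}) to the block $E_8/2401$ (resp.\ $\Lambda_{24}/7^{12}$) of cost $6/7$ and the block $\Z\supset4\Z$ of width $d=3$ and cost $1/9$. The total cost is $61/63<1$, which gives width $\sqrt{63/61}$. Your explicit check $D(4)=2\dist(2,[-\tfrac12,\tfrac12])=3$ is just a fuller write-up of the paper's one-line remark on this block.
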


\begin{proof}
The one-dimensional block $\Z\supset4\Z$ has cells of length $1$ at
same-color distance $3$, so $d=3$ and cost $1/9$; $6/7+1/9=61/63<1$.
\end{proof}

The previous bounds were $3^{10}$, $3^{25}$, $3^{26}$ and $17253$
\cite{ABPR}; $45619$ is the first bound in $\R^{10}$ below the classical
$3^n$ tower. Three colors in the last block fail ($d=2$, $6/7+1/4>1$): the
budget $1/7$ left by $E_8/2401$ requires $d\ge\sqrt7$, which a planar
lattice block first reaches (numerically) at index $19$.

\section{An index floor on $E_8$}\label{sec:floor}

\begin{lemma}[inradius bound]\label{lem:inradius}
$D(v)\le\|v\|-\lambda_1(\Lambda)$ for $\|v\|\ge\lambda_1(\Lambda)$.
\end{lemma}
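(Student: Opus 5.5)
We prove $D(v)\le\|v\|-\lambda_1$ by exhibiting an explicit pair of points, one in $V_0$ and one in $v+V_0$, at distance $\|v\|-\lambda_1$. By Lemma~\ref{lem:mid} it is enough to find a point $x\in V_0$ with $\|v/2-x\|\le\frac12(\|v\|-\lambda_1)$. Then $D(v)=2\dist(v/2,V_0)\le\|v\|-\lambda_1$.

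The natural candidate lies on the segment from $0$ to $v/2$. Set
\[
x=\frac{\lambda_1}{2\|v\|}\,v ,
\]
the point at distance $\lambda_1/2$ from the origin in the direction of $v$. The hypothesis $\|v\|\ge\lambda_1$ guarantees that $\lambda_1/(2\|v\|)\le\frac12$, so $x$ lies between $0$ and $v/2$. Consequently $\|v/2-x\|=\frac12\|v\|-\frac12\lambda_1$.

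It remains to check that $x\in V_0$, i.e.\ that the closed ball of radius $\lambda_1/2$ about the origin (the inscribed ball) lies in the Voronoi cell. For any $b\in\Lambda\setminus\{0\}$ and any $y$ with $\|y\|\le\lambda_1/2$, Cauchy--Schwarz gives
\[
\langle y,b\rangle\le\tfrac12\lambda_1\|b\|\le\tfrac12\|b\|^2 ,
\]
since $\|b\|\ge\lambda_1$. This is exactly the defining inequality $\|y\|\le\|y-b\|$ of $V_0$. Hence $x\in V_0$, and the bound follows. For $v=0$ the statement is vacuous, and the case $\|v\|<\lambda_1$ is excluded by hypothesis.

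The argument contains no genuine obstacle. The only point needing care is the inclusion of the inradius ball in $V_0$, which is the one-line Cauchy--Schwarz step above. Equality in this step requires $\|y\|=\lambda_1/2$, $y$ parallel to $b$, and $\|b\|=\lambda_1$. So in general the inequality is strict unless $v$ points along a shortest vector, consistent with the case $\alpha=m$ of Theorem~\ref{thm:eis}, where $D(m\Lambda)=(m-1)\lambda_1=\|mw\|-\lambda_1$ for shortest $w$.
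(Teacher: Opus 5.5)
Your proof is correct and is the paper's argument: the ball of radius $\lambda_1/2$ lies in $V_0$, which gives $\dist(v/2,V_0)\le\frac12(\|v\|-\lambda_1)$, and Lemma~\ref{lem:mid} doubles this. You add two things the paper leaves implicit: the explicit point $\frac{\lambda_1}{2\|v\|}v$ and the Cauchy--Schwarz check that the ball is contained in the cell.
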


\begin{proof}
The ball of radius $\lambda_1/2$ lies in $V_0$, so
$\dist(v/2,V_0)\le\|v\|/2-\lambda_1/2$.
\end{proof}

\begin{proposition}[shell floor]\label{prop:shell}
If $D(v)<\diam V_0$ for every $v\in\Lambda\setminus\{0\}$ with
$\|v\|^2<s^*$, then every $\Gamma\subset\Lambda$ defining a proper coloring
satisfies $[\Lambda:\Gamma]\ge(s^*)^{n/2}/(\gamma_n^{n/2}\det\Lambda)$,
$\gamma_n$ the Hermite constant.
\end{proposition}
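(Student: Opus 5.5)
The plan is to read the hypothesis as a statement about the sublattice $\Gamma$: a proper coloring forces every nonzero vector of $\Gamma$ to be long, and a sublattice with a long shortest vector must have large determinant by Hermite's inequality. Here ``defining a proper coloring'' means $d(\Lambda,\Gamma)>1$, i.e.\ $D(v)>\diam V_0$ for every $v\in\Gamma\setminus\{0\}$; the argument also goes through if only $D(v)\ge\diam V_0$ is required.

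\emph{Step 1 (the shortest vector of $\Gamma$).} Fix $\Gamma\subset\Lambda$ with $D(v)\ge\diam V_0$ for all $v\in\Gamma\setminus\{0\}$. Every such $v$ lies in $\Lambda\setminus\{0\}$. If $\|v\|^2<s^*$, the hypothesis gives $D(v)<\diam V_0$, which is a contradiction. Hence every nonzero vector of $\Gamma$ satisfies $\|v\|^2\ge s^*$, that is, $\lambda_1(\Gamma)^2\ge s^*$. This step uses nothing beyond the definitions in~\eqref{eq:defs}. In applications, the hypothesis is checked for short $v$ via Lemma~\ref{lem:inradius}, or by exact computation of the finitely many $D(v)$ with $\|v\|^2<s^*$; that verification is not part of the proposition itself.

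\emph{Step 2 (Hermite's inequality).} By the definition of the Hermite constant, every lattice $\Gamma\subset\R^n$ of full rank satisfies $\lambda_1(\Gamma)^2\le\gamma_n(\det\Gamma)^{2/n}$. Here $\det$ denotes the covolume, matching the convention $\vol V_0=\det\Lambda$ used in Theorem~\ref{thm:43}. Note that $\Gamma$ has finite index and is therefore of full rank. The covolume satisfies $\det\Gamma=[\Lambda:\Gamma]\det\Lambda$.

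\emph{Step 3 (combine).} Steps 1 and 2 give $s^*\le\gamma_n([\Lambda:\Gamma]\det\Lambda)^{2/n}$. Raising both sides to the power $n/2$ and rearranging yields
\[
[\Lambda:\Gamma]\ \ge\ \frac{(s^*)^{n/2}}{\gamma_n^{n/2}\det\Lambda}.
\]

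The only place needing care is the convention for $\det$. If $\det$ meant the Gram determinant, i.e.\ the squared covolume, the exponents would change. I would fix the covolume convention explicitly, consistent with $\vol V_0=\det\Lambda$ earlier in the paper. For $E_8$ one has $\det=1$ and $\gamma_8=2$, so the bound reads $[\Lambda:\Gamma]\ge(s^*/2)^4$. Beyond this bookkeeping there is no real obstacle: the substantive work in the section is establishing the hypothesis for a suitable $s^*$, not this deduction.
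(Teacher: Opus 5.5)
Your proof is correct and is essentially the paper's argument. Properness (whether read as $D(v)>\diam V_0$ or $D(v)\ge\diam V_0$ on $\Gamma\setminus\{0\}$) together with the hypothesis forces $\lambda_1(\Gamma)^2\ge s^*$, and Hermite's inequality $\lambda_1(\Gamma)^n\le\gamma_n^{n/2}\det\Gamma=\gamma_n^{n/2}[\Lambda:\Gamma]\det\Lambda$ then gives the bound; your remark that $\det$ must mean covolume agrees with the paper's usage.
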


\begin{proof}
Properness forces $\lambda_1(\Gamma)^2\ge s^*$; Hermite's inequality gives
$\lambda_1(\Gamma)^n\le\gamma_n^{n/2}\det\Gamma=\gamma_n^{n/2}[\Lambda:\Gamma]\det\Lambda$.
\end{proof}

Lemma~\ref{lem:inradius} always allows $s^*=(\diam V_0+\lambda_1)^2$; since
norms run over shells, $s^*$ jumps to the next shell whenever the first shell
above this threshold is forbidden entirely, which is certified by a one-line
witness.

\begin{lemma}[radial witness]\label{lem:radial}
Let $\langle\Lambda,\Lambda\rangle\subseteq\frac1e\Z$, $\|v\|^2=N$, and let
a rational $s\in(0,\frac12)$ satisfy $s\lfloor e\sqrt{NM}\rfloor/e\le M/2$
for every shell $M=\|u\|^2<4s^2N$. Then $sv\in V_0$ and
$D(v)\le(1-2s)\sqrt N$.
\end{lemma}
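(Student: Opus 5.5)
We need to show $sv\in V_0$, i.e.\ $\langle sv,u\rangle\le\frac12\|u\|^2$ for every $u\in\Lambda$. Once this holds, $\dist(v/2,V_0)\le\|v/2-sv\|=(\frac12-s)\sqrt N$, and Lemma~\ref{lem:mid} gives $D(v)\le(1-2s)\sqrt N$. The whole argument is therefore the membership check, which we split according to the norm $M=\|u\|^2$.

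\emph{Long vectors.} Suppose $M\ge4s^2N$. Cauchy--Schwarz gives
\[
\langle sv,u\rangle\le s\sqrt{NM}=\frac{\sqrt M}{2}\cdot2s\sqrt N\le\frac{\sqrt M}{2}\cdot\sqrt M=\frac M2,
\]
where the last step uses $2s\sqrt N\le\sqrt M$. So the Voronoi inequality holds automatically, and only the finitely many shells with $M<4s^2N$ need attention.

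\emph{Short vectors.} For those shells we use integrality. Since $\langle\Lambda,\Lambda\rangle\subseteq\frac1e\Z$, the number $e\langle v,u\rangle$ is an integer. Cauchy--Schwarz bounds it by $e\sqrt{NM}$, so $e\langle v,u\rangle\le\lfloor e\sqrt{NM}\rfloor$. Multiplying by $s/e>0$,
\[
\langle sv,u\rangle\le\frac{s\lfloor e\sqrt{NM}\rfloor}{e}\le\frac M2
\]
by the hypothesis. Vectors $u$ with $\langle v,u\rangle\le0$ trivially satisfy the inequality.

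The two cases cover all $u\in\Lambda$, so $sv\in V_0$. There is no real obstacle here. The only care needed is that the floor is taken of a nonnegative quantity (we may assume $\langle v,u\rangle>0$), and that $s<\frac12$, which makes $(1-2s)\sqrt N$ a genuine (positive) distance bound.
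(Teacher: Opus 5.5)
Your proof is correct and follows the paper's argument: Cauchy--Schwarz for shells with $M\ge4s^2N$, integrality of $e\langle v,u\rangle$ together with the floor hypothesis for the shorter shells, and then the midpoint lemma applied to the point $sv\in V_0$. Your closing caveat about signs is unnecessary, since any integer bounded above by $e\sqrt{NM}\ge0$ is automatically at most its floor.
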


\begin{proof}
We need $s\langle v,u\rangle\le\|u\|^2/2$ for all $u\in\Lambda$. For
$M\ge4s^2N$ this is Cauchy--Schwarz, $s\sqrt{NM}\le M/2$; for $M<4s^2N$,
$\langle v,u\rangle$ is a multiple of $1/e$ not exceeding $\sqrt{NM}$, and
the hypothesis applies. Then $D(v)\le2\|v/2-sv\|$ by Lemma~\ref{lem:mid}.
\end{proof}

\begin{theorem}[$2401$ is unimprovable on $E_8$]\label{thm:e8}
If $\Gamma\subset E_8$ defines a proper coloring, then $[E_8:\Gamma]\ge2401$,
with equality for $\Gamma=(3+\omega)E_8$.
\end{theorem}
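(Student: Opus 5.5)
The plan is to apply the shell floor (Proposition~\ref{prop:shell}) to $E_8$ with $s^*=14$, and to realise equality with the Eisenstein sublattice. I normalise $E_8$ as the even unimodular lattice: $\lambda_1^2=2$, $\det E_8=1$, covering radius $R=1$, so $\diam V_0=2$. I use $\gamma_8=2$, and the fact that squared norms in $E_8$ are even integers.

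The bound then reads $[E_8:\Gamma]\ge (s^*)^4/2^4$. To reach $2401=7^4$ we need $(s^*)^4\ge 16\cdot 2401=14^4$, i.e.\ $s^*=14$. So the whole task is to show that $D(v)<2=\diam V_0$ for every nonzero $v\in E_8$ with $\|v\|^2\in\{2,4,6,8,10,12\}$. Any $\Gamma$ giving a proper coloring, i.e.\ $d>1$, must then avoid these shells, so $\lambda_1(\Gamma)^2\ge 14$ because norms are even. Hermite's inequality then gives $[E_8:\Gamma]\ge 14^4/16=2401$.

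For the lower shells I use the inradius bound (Lemma~\ref{lem:inradius}): $D(v)\le\|v\|-\sqrt2<2$ whenever $\|v\|<2+\sqrt2$, i.e.\ $\|v\|^2<(2+\sqrt2)^2=6+4\sqrt2\approx11.66$. This covers norms $2$ through $10$, and the only remaining shell is $N=12$.

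For $N=12$ I apply the radial witness (Lemma~\ref{lem:radial}) with $e=1$, which is allowed since $E_8$ is integral, and $s=\tfrac14$.
\begin{itemize}
\item The shells to check are those with $M<4s^2N=3$, so only $M=2$.
\item For $M=2$ the condition is $s\lfloor\sqrt{24}\rfloor=\tfrac14\cdot4=1\le M/2=1$, which holds with equality.
\item Hence $\tfrac14v\in V_0$ and $D(v)\le(1-\tfrac12)\sqrt{12}=\sqrt3<2$.
\end{itemize}
Because the witness depends only on the norm $N$, this single check handles every norm-$12$ vector simultaneously, with no orbit decomposition needed.

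For the equality case, $(3+\omega)E_8$ has index $|3+\omega|^8=7^4=2401$, using an Eisenstein structure on $E_8$. By Corollary~\ref{cor:widths} its width is $\sqrt{7/6}>1$, so it defines a proper coloring.

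The main obstacle is the shell $N=12$, which lies just above the reach of the inradius bound. The point to verify carefully is that $s=\tfrac14$ is simultaneously large enough to give $D<2$ (this needs $s>\tfrac12-1/\sqrt{12}\approx0.211$) and small enough that only the norm-$2$ shell enters the witness, where the inequality is tight. Two further points need care. First, $\gamma_8=2$ must be used as the exact Hermite constant, attained by $E_8$ itself. Second, the parity of norms is what lets the threshold jump from $12$ to $14$, which is what makes the constant come out to exactly $2401$.
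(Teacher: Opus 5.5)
Your proposal is correct and matches the paper's proof step for step. The inradius bound clears the shells of norm at most $10$, the radial witness with $e=1$, $s=\tfrac14$ (only $M=2$ to check, with equality) forbids the shell of norm $12$, and so $s^*=14$ and the shell floor with $\gamma_8=2$ gives $14^4/2^4=2401$, with equality realised by $(3+\omega)E_8$ via Corollary~\ref{cor:widths}.
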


\begin{proof}
For $E_8$: $\lambda_1^2=2$, $R=1$, $\diam V_0=2$, $\det=1$, integral and even.
Lemma~\ref{lem:inradius} forces $\|v\|^2\ge(2+\sqrt2)^2>11$, so the first
candidate shell is $12$. For $\|v\|^2=12$ apply Lemma~\ref{lem:radial} with
$e=1$, $s=1/4$: $4s^2N=3$, only $M=2$ needs a check, and
$\frac14\lfloor\sqrt{24}\rfloor=1=M/2$; hence
$D(v)\le\frac12\sqrt{12}=\sqrt3<2$, the shell $12$ is forbidden, $s^*=14$,
and Proposition~\ref{prop:shell} with $\gamma_8=2$ \cite{Blichfeldt} gives
$[E_8:\Gamma]\ge14^4/2^4=2401$. The sublattice $(3+\omega)E_8$ has index
$2401$ and is proper by Corollary~\ref{cor:widths}.
\end{proof}

Coulson's general bound $\chi_s\ge2^{n+1}-1$ \cite{Coulson,ABPR} gives only
$511$ for $E_8$, so Theorem~\ref{thm:e8} answers Open question~8 of
\cite{ABPR} for this lattice. The same argument (shells $4$, $19/4$, $5$ of
the bcc lattice, witness $e=4$, $s=1/4$) reproves Coulson's floor $15$ on the
bcc lattice from its norm spectrum alone, and gives the floors $305$ on
$E_6^*$ (record $343$) and $5\,688\,009\,064$ on the Leech lattice (record
$7^{12}$; $\gamma_{24}=4$ \cite{CohnKumar}). On lattices in general position
the norm spectrum is almost continuous and the screen is silent; whether
$43$ is minimal in $\R^4$ remains open.

\section{Reproducibility and provenance}\label{sec:repro}

All code, exact certificates and data are at
\url{https://github.com/ivaleo/Chromatic} (MIT license). Run from
\texttt{audit-data/}, the command
\texttt{python -m chromatic\_research.campaigns.verify\_main\_results}
recomputes the final inequalities of all five rows of Table~\ref{tab:main}
from the published fractions in rational arithmetic; with \texttt{--full} it
also runs the complete verifiers of the rows $4$, $5$, $7$ and $9$ (the last
enumerates $1\,654\,230$ vertices and takes hours). The certificates are
\path{results/dim4_k43_verify.json} and
\path{results/cert43_eisenstein.json} (Theorem~\ref{thm:43}),
\path{results/metric_deform_a5_132_refined_certificate.json}
(Theorem~\ref{thm:132}), \path{results/dim7_1029_exact.json}
(Theorem~\ref{thm:1029}) and \path{results/dim9_7203_exact.json}
(Theorem~\ref{thm:7203}); each is self-contained (rational Gram form,
sublattice, exact $R^2$, complete window, KKT certificates) and can be
re-verified by a short independent program. Every claim above is a theorem
or an exact certificate; numerical results without proof strength are
confined to the repository.

\paragraph{Contributions and the use of AI.}
The problem setting, methods and the computational pipeline, lamination, 
the product calculus and the lower bound via projection onto a plane are due to L.\,L.~Ivanov. 
N.\,V.~Glushkova found the index-$43$ lattice, proved Proposition~\ref{prop:eisup}, 
and wrote the exact verifiers of the laminations that turned $1029$ and $7203$ into Theorems~\ref{thm:1029} and~\ref{thm:7203} 
(with the decisive unimodular change of basis in $\R^9$). 
The symmetry-restricted search, the independent re-verifications and the shell floor are joint. 
The computational pipeline, campaign scripts and first drafts of the text 
were produced with systematic use of an AI assistant 
(general-purpose large language models in an agentic coding environment); 
problem statements, search directions, acceptance criteria and final verification belong to the authors, and the theorems do not depend 
on the provenance of the search code. Responsibility for all claims rests with the authors.

\end{document}